\documentclass[11pt,leqno]{amsart}

\usepackage{amsmath}
\usepackage{amssymb}
\usepackage{amsthm}
\usepackage{amsfonts}
\usepackage{mathtools}
\usepackage{ascmac}
\usepackage{spalign}
\usepackage{fancybox} 
\usepackage[mathscr]{eucal}
\usepackage{footnpag}
\usepackage{siunitx}
\usepackage{multicol}
\usepackage{bbm}
\usepackage{framed}
\usepackage{here}

\usepackage{graphicx}
\usepackage{xcolor}
\usepackage{wrapfig}
\usepackage{float}

\usepackage{url}
\usepackage{comment}

\theoremstyle{plain}
\newtheorem{theorem}{Theorem}[section]
\newtheorem{proposition}[theorem]{Proposition}
\newtheorem{lemma}[theorem]{Lemma}
\newtheorem{corollary}[theorem]{Corollary}
\theoremstyle{definition}
\newtheorem{definition}[theorem]{Definition}
\theoremstyle{remark}
\newtheorem{remark}[theorem]{Remark}

\title[Average Hitting Times and Recurrence Structures II]
{Average hitting times and recurrence structures II: Cartesian products of powers of cycles and regular graphs}

\author[Miezaki]{Tsuyoshi Miezaki}
\address{Faculty of Science and Engineering, Waseda University, Tokyo 169--8555, Japan}
\email{miezaki@waseda.jp} 

\author[Tamura]{Shunya Tamura*}
\thanks{*Corresponding author}
\address{Okegawa City Okegawa West Junior High School, Saitama, 363-0027, Japan}
\email{shunya.tamura059@gmail.com} 

\keywords{random walk, average hitting time, Cartesian product graph, cycle power graph, Chebyshev polynomial, second-order linear recurrence. }

\makeatletter
\@namedef{subjclassname@2020}{\textup{2020} Mathematics Subject Classification}
\makeatother

\subjclass[2020]{Primary 05C81; Secondary 05C50, 60J10}
\begin{document}
\maketitle
\begin{abstract}
In our previous work \cite{MiezakiTamura2026}, 
we clarified the second-order linear recurrence structures appearing in the average hitting times on the $k$-th power graph $C_N^k$ of the cycle graph. 
In this paper, for a connected $r$-regular graph $G$ on $m$ vertices, 
we investigate the average hitting times of the simple random walk on the Cartesian product graph $C_N^k \square G$. 
By using discrete Fourier analysis in the $C_N^k$ direction and the Laplacian spectral decomposition of $G$, 
we decompose the average hitting time into a component proportional to the average hitting time on $C_N^k$ and correction terms arising from the nonzero Laplacian eigenspaces of $G$. 
For each nonzero Laplacian eigenvalue, we introduce a Chebyshev-type polynomial, and when all of its roots are simple, 
we express the correction term as a finite Green-type sum. 
Furthermore, for two vertices having the same $G$-coordinate, 
we transform this expression into a second-order linear recurrence representation of the form $V_\ell V_{N-\ell}/V_N$. 

When $G$ is a walk-regular graph, the average hitting time between two vertices having the same $G$-coordinate depends only on the Laplacian eigenvalues of $G$ and their multiplicities. 
We also derive formulas for the number of spanning trees and the number of two-component spanning forests of $C_N^k \square G$, 
and give several explicit examples. 
\end{abstract}

\section{Introduction}
\label{sec01}

The average hitting time of a simple random walk on a graph is a fundamental quantity connecting probability theory, 
spectral graph theory, and electrical network theory \cite{DoiEtAl2022, MiezakiTamura2026, NashWilliams1959, Tamura2026, Wu2004}. 
For a finite connected graph $Y$ and two vertices $x, y\in V(Y)$, 
we denote by $h_Y(x, y)$ the expected time for a simple random walk starting from $x$ to reach $y$ for the first time. 
The average hitting time is related to the effective resistance by
\[
h_Y(x, y)+h_Y(y, x)
=2|E(Y)|R_Y(x, y)
\]
\cite{NashWilliams1959}, and is also closely related to the number of spanning trees and the number of two-component spanning forests.

In this paper, we consider the Cartesian product
\[
X=C_N^k\square G
\]
of the $k$-th power graph $C_N^k$ of the cycle graph and a connected $r$-regular graph $G$ on $m$ vertices. 
Here, the vertex set of $C_N^k$ is $\mathbb Z/N\mathbb Z$, and two vertices $x, y$ are adjacent if
\[
x-y\equiv\pm s\pmod N, \qquad 1 \le s \le k. 
\]
Throughout this paper, we assume that $N \ge 2k+1$. 
Then $X$ is a $(2k+r)$-regular graph. 

In our previous work \cite{MiezakiTamura2026}, 
we showed that the correction terms appearing in the average hitting times on $C_N^k$ can be described by second-order linear recurrence relations. 
For example, when $k=2$, we have
\[
h_{C_N^2}(0, \ell)
=\frac{2}{5}\left\{\ell(N-\ell)+\frac{2NF_\ell F_{N-\ell}}{F_N}\right\}
\]
\cite{DoiEtAl2022, MiezakiTamura2026}. 
Here, $F_n$ denotes the $n$-th Fibonacci number. 
The purpose of this paper is to extend such a product structure to the Cartesian product graph $C_N^k\square G$. 

A general framework for expressing Green functions on Cartesian products of regular graphs in terms of the spectra of the factor graphs was given by Ellis \cite{Ellis2003}. 
The main novelty of this paper is that, 
by taking the first factor to be $C_N^k$, 
we describe each spectral correction term by an explicit Chebyshev-type polynomial and explicitly transform its Green-type representation into a product representation involving second-order linear recurrence sequences. 

More precisely, we combine discrete Fourier analysis in the $C_N^k$ direction with the Laplacian spectral decomposition of $G$. 
Let the distinct Laplacian eigenvalues of $G$ be 
\[
0=\nu_0<\nu_1<\cdots<\nu_t,
\]
and let $E_\alpha$ denote the orthogonal projection onto the eigenspace corresponding to $\nu_\alpha$. 
Then, for any $a, b \in V(G)$ and $0 \le \ell \le N-1$, 
we obtain 
\[
h_X((0, a), (\ell, b))
=\frac{2k+r}{2k}h_{C_N^k}(0, \ell)+\mathcal R_{N, k, G}(\ell; a, b).
\]
The first term arises from the constant eigenspace on the $G$-side, 
whereas the second term is a correction term arising from the nonzero Laplacian eigenspaces. 
For a general regular graph, this correction term involves not only the eigenvalues but also the entries $E_\alpha(a, b)$ of the spectral projections. 

For each nonzero Laplacian eigenvalue $\nu$,
we introduce the Chebyshev-type polynomial
\[
D_{k, \nu}(x)
=k+\frac{\nu}{2}-\sum_{s=1}^{k}T_s\left(\frac{x}{2}\right). 
\]
When all of its roots are simple, 
each correction term can be expressed as a finite sum of Green-type terms corresponding to the roots of $D_{k, \nu}(x)$. 

In particular, for two vertices $(0, a)$ and $(\ell, a)$ having the same $G$-coordinate, 
we use the sequences $\{V_n^{(\alpha, c)}\}_{n\ge0}$ determined by second-order linear recurrence relations to express the correction term as a product of the form
\[
\frac{V_\ell^{(\alpha, c)}V_{N-\ell}^{(\alpha, c)}}{V_N^{(\alpha, c)}}.
\]
This generalizes the Fibonacci-type representation appearing in our previous work to a family of recurrence relations corresponding to the nonzero Laplacian eigenvalues of $G$. 

Furthermore,
when $G$ is a walk-regular graph \cite{GodsilMcKay1980}, 
we have 
\[
E_\alpha(a, a)
=\frac{m_\alpha}{m},
\]
where $m_\alpha$ is the multiplicity of $\nu_\alpha$. 
Therefore, the average hitting time between two vertices having the same $G$-coordinate can be described only in terms of the Laplacian eigenvalues of $G$ and their multiplicities. 

Using the same spectral decomposition, 
we also derive a formula for the number of spanning trees of $C_N^k\square G$. 
Furthermore, from the relation between average hitting times and effective resistances, 
we obtain a formula for the number of two-component spanning forests. 
In particular, for two vertices having the same $G$-coordinate, 
the recurrence structures appearing in the average hitting times are also reflected in the number of two-component spanning forests. 

As explicit examples, we consider the cases in which the second factor is a complete graph, a complete bipartite graph, or the Petersen graph. 
Through these examples, we show that the distinct nonzero Laplacian eigenvalues of $G$ and their multiplicities determine the family of recurrence relations appearing in the average hitting times. 

The rest of this paper is organized as follows. 
In Section \ref{sec02}, we prepare basic notation and spectral representations. 
In Section \ref{sec03}, we decompose the average hitting time into a constant component and spectral correction terms. 
In Section \ref{sec04}, we derive the Green-type representation and the product representation involving second-order linear recurrence sequences. 
In Section \ref{sec05}, we give applications to the number of spanning trees and the number of two-component spanning forests. 
In Section \ref{sec06}, we discuss explicit examples, 
and in Section \ref{sec07}, we give concluding remarks and future problems.

\section{Preliminaries}
\label{sec02}

In this section, we prepare the notation and basic facts used throughout this paper. 
Unless otherwise stated, all graphs considered in this paper are finite, simple, and undirected. 
We also assume that $N$ and $k$ are positive integers. 
In what follows, we write $\mathbb Z_N=\mathbb Z/N\mathbb Z$. 

\subsection{The Cartesian product graph}
Let $C_N$ be the cycle graph on $N$ vertices. 
Its $k$-th power graph $C_N^k$ is defined by
\[
V(C_N^k)=\mathbb Z_N, \qquad
E(C_N^k)
=\left\{\{x,y\}\;\Big|\;x-y\equiv\pm s\pmod N,\;1\le s\le k\right\}.
\]
In what follows, we assume that $N \ge 2k+1$. 
Then $C_N^k$ is a $2k$-regular graph. 

Let $G$ be a connected $r$-regular graph on $m$ vertices, and let
\[
V(G)=\{1,2,\ldots,m\}.
\]
In this paper, we consider the Cartesian product graph
\[
X=C_N^k\square G.
\]
Its vertex set is
\[
V(X)=\mathbb Z_N\times V(G).
\]

Two vertices $(x,a)$ and $(y,b)$ are adjacent in $X$ if
\[
a=b, \qquad
x-y\equiv\pm s\pmod N, \qquad
1 \le s \le k,
\]
or
\[
x=y, \qquad
a \sim_G b.
\]
Here, $a \sim_G b$ means that $a$ and $b$ are adjacent in $G$. 

Therefore, $X$ is
\[
d=2k+r
\]
regular, and
\[
|V(X)|=Nm.
\]

\subsection{Random walks and hitting times}
We consider the simple random walk $\{Z_t\}_{t\ge0}$ on $X$. 
At each time, the walker moves from its current position to each adjacent vertex with probability $1/d$. 
For vertices $u, v \in V(X)$, let
\[
T_v=\inf\{t\ge0\mid Z_t=v\}
\]
be the hitting time of $v$, and define
\[
h_X(u,v)=\mathbb E_u[T_v]
\]
as the average hitting time from $u$ to $v$. 
In this paper, we mainly investigate
\[
h_X((0,a),(\ell,b)), \qquad
a, b \in V(G), \qquad
0 \le \ell \le N-1.
\]

\subsection{Fourier analysis on $C_N^k$}
For each $j=0, 1, \ldots, N-1$, put $\theta_j=2\pi j/N$, and define
\[
\chi_j(x)
=\frac1{\sqrt N}e^{ix\theta_j} \qquad (x \in \mathbb Z_N).
\]
Then
\[
\chi_0,\chi_1,\ldots,\chi_{N-1}
\]
form an orthonormal Fourier basis on $\mathbb Z_N$. 

Let $A_k$ be the adjacency matrix of $C_N^k$.
The Fourier diagonalization of circulant adjacency matrices is standard;
see, for example, \cite{BrouwerHaemers2012}.
For the same notation for the power graph $C_N^k$, see also
\cite{MiezakiTamura2026}.
In the present notation, we have
\[
A_k\chi_j
=\mu_j\chi_j,
\]
where
\[
\mu_j
=2\sum_{s=1}^{k}\cos(s\theta_j)
=2\sum_{s=1}^{k}\cos\frac{2\pi js}{N}.
\]

Indeed, by definition, 
\[
\begin{aligned}
(A_k\chi_j)(x)
&=\sum_{s=1}^{k}\left(\chi_j(x+s)+\chi_j(x-s)\right) \\
&=\chi_j(x)\sum_{s=1}^{k}\left(e^{is\theta_j}+e^{-is\theta_j}\right) \\
&=2\sum_{s=1}^{k}\cos(s\theta_j)\chi_j(x).
\end{aligned}
\]

\subsection{Spectral data of $G$}
Let $A_G$ be the adjacency matrix of $G$, and let its Laplacian matrix $L_G$ be
\[
L_G=rI_m-A_G. 
\]
Let the distinct Laplacian eigenvalues of $G$ be
\[
0=\nu_0<\nu_1<\cdots<\nu_t,
\]
and let $m_\alpha$ be the multiplicity of $\nu_\alpha$. 
Since $G$ is connected, 
\[
m_0=1.
\]

Let $E_\alpha$ be the orthogonal projection onto the eigenspace corresponding to $\nu_\alpha$. 
Then
\[
L_G=\sum_{\alpha=0}^{t}\nu_\alpha E_\alpha, \qquad
I_m=\sum_{\alpha=0}^{t}E_\alpha. 
\]

Since the zero eigenspace is spanned by the constant vector,
\[
E_0=\frac1mJ_m. 
\]
Here, $J_m$ is the $m\times m$ matrix whose entries are all equal to $1$. 
Therefore,
\[
E_0(a,b)=\frac{1}{m} \qquad (a, b\in V(G)). 
\]

Moreover, since
\[
A_G=rI_m-L_G,
\]
the eigenvalue of $A_G$ corresponding to $\nu_\alpha$ is
\[
r-\nu_\alpha. 
\]

\subsection{Spectral formula for hitting times}
We prepare the standard spectral representation of average hitting times on regular graphs. 

\begin{lemma}[\cite{Lovasz1993}, Theorem 3.1]
\label{lem03}
Let $Y$ be a connected regular graph on $n$ vertices, 
and let $P_Y$ be the transition probability matrix of the simple random walk. 
Let the eigenvalues of $P_Y$ be
\[
1=\lambda_0,\lambda_1,\ldots,\lambda_{n-1},
\]
and let the corresponding orthonormal eigenfunctions be
\[
\psi_0,\psi_1,\ldots,\psi_{n-1}.
\]
Here, $\psi_0$ is the constant eigenfunction. 
Then
\[
h_Y(u,v)
=n\sum_{q=1}^{n-1}\frac{|\psi_q(v)|^2-\psi_q(u)\overline{\psi_q(v)}}{1-\lambda_q}. 
\]
\end{lemma}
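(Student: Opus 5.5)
The plan is to prove Lemma~\ref{lem03} with the fundamental matrix (Green function) of the walk. Write $P=P_Y$ and $\pi=\frac1n\mathbf 1$ for the uniform stationary distribution; it is stationary because $Y$ is regular, so $P$ is symmetric. Fix the target $v$ and put $f(u)=h_Y(u,v)$. Conditioning on the first step shows that $f$ is the unique solution of
\[
f(v)=0,\qquad f(u)=1+\sum_{w}P(u,w)f(w)\quad (u\neq v).
\]
Equivalently, $(I-P)f=\mathbf 1-c\,\delta_v$ for a constant $c$. Summing both sides against $\pi$, and using $\pi(I-P)=0$, gives $c=1/\pi(v)=n$. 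This is the return time at $v$, namely Kac's formula.

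Next we solve this system spectrally. The symmetric matrix $P$ has the orthonormal eigenbasis $\psi_0,\dots,\psi_{n-1}$, with $\psi_0=\frac1{\sqrt n}\mathbf 1$ and $\lambda_q<1$ for $q\ge1$ by connectivity. Define the Green operator
\[
Z=\sum_{q=1}^{n-1}\frac{1}{1-\lambda_q}\,\psi_q\psi_q^{*},
\]
that is, $Z(u,w)=\sum_{q\ge1}\psi_q(u)\overline{\psi_q(w)}/(1-\lambda_q)$. It satisfies $(I-P)Z=I-\psi_0\psi_0^*=I-\frac1nJ$. The right-hand side $\mathbf 1-n\delta_v$ is orthogonal to $\psi_0$, so $g=Z(\mathbf 1-n\delta_v)$ solves $(I-P)g=\mathbf 1-n\delta_v$. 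Since $Z\mathbf 1=0$, this simplifies to $g(u)=-nZ(u,v)$.

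The kernel of $I-P$ consists of the constants, so $f=g+C$ for some constant $C$. The condition $f(v)=0$ forces $C=nZ(v,v)$. Hence
\[
h_Y(u,v)=n\bigl(Z(v,v)-Z(u,v)\bigr)=n\sum_{q=1}^{n-1}\frac{|\psi_q(v)|^2-\psi_q(u)\overline{\psi_q(v)}}{1-\lambda_q},
\]
which is the claimed formula. The eigenfunctions may be taken complex, as with the Fourier basis used later; the computation still goes through because $P$ is real symmetric, so the orthonormal decomposition $I=\sum_q\psi_q\psi_q^*$ holds.

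The main point needing care is the justification of the first-step system and its uniqueness. Finiteness of $h_Y$ follows from irreducibility of the walk on the finite connected graph $Y$. Uniqueness holds because the difference of two solutions is harmonic off $v$ and vanishes at $v$, so it vanishes identically by the maximum principle. The remaining steps are routine linear algebra.
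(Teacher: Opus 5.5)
Your proof is correct. Its spectral core matches the paper's; the difference is where the key identity comes from.

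The paper sets $Z=(I_n-P_Y+\Pi)^{-1}=\Pi+\sum_{q\ge1}(1-\lambda_q)^{-1}\psi_q\psi_q^\ast$ and quotes Lov\'asz's formula $h_Y(u,v)=\{Z(v,v)-Z(u,v)\}/\pi(v)$ without proof. The $\Pi$-terms then cancel in the difference.

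You instead use the pseudo-inverse $\sum_{q\ge1}(1-\lambda_q)^{-1}\psi_q\psi_q^\ast$ of $I-P$. This is the paper's $Z$ minus $\Pi$, which makes no difference in $Z(v,v)-Z(u,v)$. You then derive Lov\'asz's formula yourself: first-step equations, Kac's formula to identify the constant $c=n$, solving the Poisson equation on $\mathbf 1^\perp$, and fixing the additive constant by $f(v)=0$. So your argument is self-contained where the paper's rests on a citation, at the cost of a few more lines.

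One small point: you justify uniqueness of the first-step system via the maximum principle, but that is never used. You only need that the true hitting-time vector satisfies $(I-P)f=\mathbf 1-n\delta_v$ and that $\ker(I-P)$ consists of the constants. What is genuinely needed is finiteness of $h_Y(\cdot,v)$, which you do note.
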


\begin{proof}
For the reader's convenience, we give the proof in the regular case.

Since $Y$ is regular, the stationary distribution of the simple random walk is uniform, and
\[
\pi(v)=\frac1n \qquad (v \in V(Y)). 
\]

Put
\[
\Pi=\frac1nJ_n. 
\]
Then $\Pi$ is the orthogonal projection onto the constant eigenspace. 
Since $P_Y$ is a real symmetric matrix, it admits an orthonormal spectral decomposition, and we can write
\[
P_Y=\Pi+\sum_{q=1}^{n-1}\lambda_q\psi_q\psi_q^\ast. 
\]
Here, $\psi_q^\ast$ denotes the conjugate transpose of $\psi_q$. 

Therefore, 
\[
I_n-P_Y+\Pi
=\Pi+\sum_{q=1}^{n-1}(1-\lambda_q)\psi_q\psi_q^\ast. 
\]
Since $Y$ is connected, we have $\lambda_q\ne1$ for $q\ge1$, and hence
\[
Z
=(I_n-P_Y+\Pi)^{-1}
=\Pi+\sum_{q=1}^{n-1}\frac1{1-\lambda_q}\psi_q\psi_q^\ast.
\]

We use the standard formula for average hitting times
\[
h_Y(u,v)
=\frac{Z(v,v)-Z(u,v)}{\pi(v)}
\]
\cite[Theorem 3.1]{Lovasz1993}.
Since $\pi(v)=1/n$, we obtain
\[
h_Y(u,v)
=n\left\{Z(v,v)-Z(u,v)\right\}.
\]

On the other hand, since all entries of $\Pi$ are equal to $1/n$, 
the terms arising from $\Pi$ cancel in the difference $Z(v,v)-Z(u,v)$. 
Therefore,
\[
\begin{aligned}
Z(v,v)-Z(u,v)
&=\sum_{q=1}^{n-1}\frac{|\psi_q(v)|^2-\psi_q(u)\overline{\psi_q(v)}}{1-\lambda_q}.
\end{aligned}
\]
Hence,
\[
h_Y(u,v)
=n\sum_{q=1}^{n-1}\frac{|\psi_q(v)|^2-\psi_q(u)\overline{\psi_q(v)}}{1-\lambda_q}. 
\]

Moreover, the sum over the eigenfunctions corresponding to each eigenvalue can be expressed in terms of the orthogonal projection onto the corresponding eigenspace. 
Therefore, this sum is independent of the choice of an orthonormal basis within each eigenspace, 
and the same formula holds when a complex orthonormal eigenbasis is used. 
\end{proof}

The spectral behavior of Cartesian product graphs is standard;
see, for example, \cite{BrouwerHaemers2012}.
For the reader's convenience, we recall it in the present notation.

The adjacency matrix of $X=C_N^k\square G$ is
\[
A_X
=A_k \otimes I_m+I_N \otimes A_G. 
\]
Therefore, its spectrum, counted with multiplicities, consists of
\[
\mu_j+r-\nu_\alpha, \qquad
0\le j\le N-1, \qquad
0\le\alpha\le t,
\]
where each pair $(j,\alpha)$ contributes with multiplicity $m_\alpha$. 
Since the transition probability matrix on $X$ is 
\[
P_X
=\frac1{2k+r}A_X,
\]
we write the corresponding eigenvalues as
\[
\xi_{j,\alpha}
=\frac{\mu_j+r-\nu_\alpha}{2k+r}.
\]
Among these, 
\[
\xi_{0,0}=1
\]
is the unique eigenvalue $1$ corresponding to the constant eigenfunction. 

\begin{proposition}
\label{prop04}
For $a, b \in V(G)$ and $0\le\ell\le N-1$, we have
\[
\begin{aligned}
h_X((0,a),(\ell,b))
&=\sum_{j=1}^{N-1}\frac{1-\cos(\ell\theta_j)}{1-\xi_{j,0}} \\
&\quad+m\sum_{\alpha=1}^{t}\sum_{j=0}^{N-1}\frac{E_\alpha(b,b)-E_\alpha(a,b)\cos(\ell\theta_j)}{1-\xi_{j,\alpha}}.
\end{aligned}
\]
\end{proposition}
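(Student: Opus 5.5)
We apply Lemma \ref{lem03} to $Y=X$, with $n=Nm$, $u=(0,a)$ and $v=(\ell,b)$, and use the tensor structure $A_X=A_k\otimes I_m+I_N\otimes A_G$. We do not fix an orthonormal eigenbasis of $P_X$ explicitly. Instead we use the basis-independent form noted at the end of the proof of Lemma \ref{lem03}: the hitting time equals
\[
n\bigl(Z(v,v)-Z(u,v)\bigr),\qquad Z=\Pi+\sum_{\lambda\ne1}\frac{1}{1-\lambda}P_\lambda ,
\]
where $P_\lambda$ is the orthogonal projection onto the $\lambda$-eigenspace. We work with the finer family of commuting projections
\[
\Phi_{j,\alpha}=\chi_j\chi_j^\ast\otimes E_\alpha .
\]
These are orthogonal projections, mutually orthogonal, and they sum to $I_{Nm}$. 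Each is contained in an eigenspace of $P_X$ with eigenvalue $\xi_{j,\alpha}$, since $A_X(\chi_j\otimes w)=(\mu_j+r-\nu_\alpha)\chi_j\otimes w$ for $w$ in the range of $E_\alpha$. Grouping the $\Phi_{j,\alpha}$ according to the value of $\xi_{j,\alpha}$ gives the spectral projections, even when different pairs $(j,\alpha)$ share the same eigenvalue. Hence
\[
Z=\Pi+\sum_{(j,\alpha)\ne(0,0)}\frac{\Phi_{j,\alpha}}{1-\xi_{j,\alpha}},
\]
and $\Phi_{0,0}=\Pi$. The denominators do not vanish: $\xi_{j,\alpha}=1$ would force $\mu_j=2k$ and $\nu_\alpha=0$, i.e. $(j,\alpha)=(0,0)$. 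This follows from connectivity, or directly because $\mu_j<2k$ for $j\ne0$ when $N\ge 2k+1$.

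Next we compute the entries. We have
\[
\Phi_{j,\alpha}\bigl((x,c),(y,d)\bigr)=\frac1N e^{i(x-y)\theta_j}E_\alpha(c,d).
\]
This gives
\[
\Phi_{j,\alpha}(v,v)=\frac1N E_\alpha(b,b),\qquad
\Phi_{j,\alpha}(u,v)=\frac1N e^{-i\ell\theta_j}E_\alpha(a,b).
\]
Multiplying the difference by $n=Nm$ gives the coefficient
\[
m\bigl(E_\alpha(b,b)-e^{-i\ell\theta_j}E_\alpha(a,b)\bigr).
\]

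We then split the sum over $(j,\alpha)\ne(0,0)$ into the part with $\alpha=0$, $j\ge1$, and the part with $\alpha\ge1$, $0\le j\le N-1$. For $\alpha=0$ we use $E_0(a,b)=E_0(b,b)=1/m$, so the factor $m$ cancels and the coefficient becomes $1-e^{-i\ell\theta_j}$. This is the first sum in Proposition \ref{prop04}, once the exponentials are replaced by cosines.

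The remaining step, which is the only point needing care, is to justify replacing $e^{-i\ell\theta_j}$ by $\cos(\ell\theta_j)$. The map $j\mapsto N-j$ sends $\theta_j$ to $-\theta_j$ modulo $2\pi$ and fixes $\mu_j$, because $\mu_j$ is a sum of cosines. Hence it fixes $\xi_{j,\alpha}$ for every $\alpha$. It also maps $\{1,\dots,N-1\}$ and $\{0,\dots,N-1\}$ to themselves. So in each sum we may average the terms for $j$ and $N-j$, and the imaginary parts $\pm\sin(\ell\theta_j)$ cancel, leaving $\cos(\ell\theta_j)$. The $E_\alpha(a,b)$ are real, since $E_\alpha$ is the projection onto an eigenspace of the real symmetric matrix $L_G$, so nothing else changes. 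This yields the stated formula.
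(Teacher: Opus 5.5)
Your proof is correct and follows essentially the same route as the paper: apply Lemma \ref{lem03} to the product eigenstructure of $X$, collapse the $G$-directions into $E_\alpha(a,b)$, and pair $j$ with $-j \bmod N$ to replace $e^{-i\ell\theta_j}$ by $\cos(\ell\theta_j)$. Working with the projections $\chi_j\chi_j^\ast\otimes E_\alpha$ instead of the explicit basis $\chi_j u_{\alpha,q}$ is only a cosmetic difference, and the one slip is harmless: the pairing should be $j\mapsto -j \bmod N$, which fixes $j=0$, not $j\mapsto N-j$, which sends $0$ to $N$.
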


\begin{proof}
For each $\alpha=0,1,\ldots,t$, let
\[
u_{\alpha,1}, u_{\alpha,2}, \ldots, u_{\alpha,m_\alpha}
\]
be a real orthonormal basis of the eigenspace corresponding to $\nu_\alpha$. 
Then
\[
E_\alpha(a,b)
=\sum_{q=1}^{m_\alpha}u_{\alpha,q}(a)u_{\alpha,q}(b).
\]

The orthonormal eigenfunctions on $X$ are
\[
\Psi_{j,\alpha,q}(x,c)
=\chi_j(x)u_{\alpha,q}(c)
=\frac1{\sqrt N}e^{ix\theta_j}u_{\alpha,q}(c),
\]
and the corresponding eigenvalue is $\xi_{j,\alpha}$. 

First, suppose that $\alpha=0$. 
In this case, 
\[
u_{0,1}
=\frac1{\sqrt m}(1,1,\ldots,1)^t.
\]
Applying Lemma \ref{lem03}, the contribution excluding $(j,\alpha)=(0,0)$, 
which corresponds to the constant eigenfunction, is 
\[
\sum_{j=1}^{N-1}
\frac{1-e^{-i\ell\theta_j}}{1-\xi_{j,0}}. 
\]
By combining the terms corresponding to $j$ and $-j\bmod N$, 
the imaginary parts cancel, and hence this is equal to 
\[
\sum_{j=1}^{N-1}\frac{1-\cos(\ell\theta_j)}{1-\xi_{j, 0}}. 
\]

Next, suppose that $\alpha\ge1$. 
For fixed $j$ and $\alpha$, summing the numerator in Lemma \ref{lem03} over $q=1, \ldots, m_\alpha$, we obtain
\[
\begin{aligned}
&Nm\sum_{q=1}^{m_\alpha}\left\{|\Psi_{j,\alpha,q}(\ell,b)|^2-\Psi_{j,\alpha,q}(0,a)\overline{\Psi_{j,\alpha,q}(\ell,b)}\right\} \\
&=m\left\{E_\alpha(b,b)-e^{-i\ell\theta_j}E_\alpha(a,b)\right\}.
\end{aligned}
\]
Therefore, the contribution corresponding to $\nu_\alpha$ is
\[
m\sum_{j=0}^{N-1}\frac{E_\alpha(b,b)-e^{-i\ell\theta_j}E_\alpha(a,b)}{1-\xi_{j,\alpha}}. 
\]

Here, $E_\alpha(a,b)$ is real, and
\[
\xi_{j,\alpha}
=\xi_{-j\bmod N,\alpha}.
\]
By combining the terms corresponding to $j$ and $-j\bmod N$, this contribution is equal to
\[
m\sum_{j=0}^{N-1}\frac{E_\alpha(b,b)-E_\alpha(a,b)\cos(\ell\theta_j)}{1-\xi_{j,\alpha}}. 
\]

Combining all the contributions proves the assertion. 
\end{proof}

By Proposition \ref{prop04}, the average hitting time is decomposed into a component arising from the constant eigenspace on the $G$-side and components arising from the nonzero Laplacian eigenspaces. 
In the next section, we express the former in terms of the average hitting time on $C_N^k$, 
and describe the latter by Chebyshev-type polynomials.

\section{Spectral decomposition}
\label{sec03}

In this section, we decompose the spectral representation in Proposition \ref{prop04} into the component arising from the constant eigenspace on the $G$-side and the components arising from the nonzero Laplacian eigenspaces. 

Throughout this section, $\theta_j$ and $\mu_j$ are defined as in Section 2. 
Namely, 
\[
\theta_j=\frac{2\pi j}{N}, \qquad
\mu_j=2\sum_{s=1}^{k}\cos(s\theta_j).
\]

In the right-hand side of Proposition \ref{prop04}, 
we denote the component arising from the constant eigenspace on the $G$-side by 
\[
\mathcal S_0
=\sum_{j=1}^{N-1}\frac{1-\cos(\ell\theta_j)}{1-\xi_{j,0}}. 
\]

For each $\alpha=1, \ldots, t$, 
we also denote the component arising from the eigenspace corresponding to $\nu_\alpha$ by
\[
\mathcal S_\alpha
=m\sum_{j=0}^{N-1}\frac{E_\alpha(b,b)-E_\alpha(a,b)\cos(\ell\theta_j)}{1-\xi_{j,\alpha}}. 
\]

Then, by Proposition \ref{prop04}, 
\[
h_X((0,a),(\ell,b))
=\mathcal S_0+\sum_{\alpha=1}^{t}\mathcal S_\alpha.
\]

\begin{proposition}[\cite{MiezakiTamura2026}, Proposition 2.4]
\label{propCyclePowerHitting}
For $0\le \ell\le N-1$, we have
\[
h_{C_N^k}(0,\ell)
=
2k\sum_{j=1}^{N-1}
\frac{1-\cos(\ell\theta_j)}{2k-\mu_j}.
\]
\end{proposition}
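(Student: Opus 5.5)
We obtain the formula by applying Lemma \ref{lem03} to the graph $Y=C_N^k$ itself, with $n=N$. Since $N\ge 2k+1$, the graph $C_N^k$ is connected and $2k$-regular. Its transition matrix is $P=\frac{1}{2k}A_k$.

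By the Fourier computation of Section \ref{sec02}, the characters $\chi_0,\ldots,\chi_{N-1}$ form an orthonormal eigenbasis of $P$, and the eigenvalue attached to $\chi_j$ is $\lambda_j=\mu_j/(2k)$. The index $j=0$ gives the constant eigenfunction $\chi_0$, with $\lambda_0=1$. Connectedness guarantees $\lambda_j\neq 1$ for $1\le j\le N-1$. At the end of its proof, Lemma \ref{lem03} is noted to remain valid for a complex orthonormal eigenbasis, so we may use the characters directly.

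Taking $u=0$ and $v=\ell$, we have $|\chi_j(\ell)|^2=1/N$ and
\[
\chi_j(0)\overline{\chi_j(\ell)}=\frac1N e^{-i\ell\theta_j}.
\]
Substituting these into Lemma \ref{lem03} gives
\[
h_{C_N^k}(0,\ell)=N\sum_{j=1}^{N-1}\frac{\frac1N\bigl(1-e^{-i\ell\theta_j}\bigr)}{1-\mu_j/(2k)}
=2k\sum_{j=1}^{N-1}\frac{1-e^{-i\ell\theta_j}}{2k-\mu_j}.
\]

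It remains to replace $e^{-i\ell\theta_j}$ by $\cos(\ell\theta_j)$. The cosine formula for $\mu_j$ shows $\mu_{N-j}=\mu_j$. Moreover, the map $j\mapsto N-j$ is a bijection of $\{1,\ldots,N-1\}$, and it sends $e^{-i\ell\theta_j}$ to its complex conjugate. Pairing the terms for $j$ and $N-j$ therefore cancels the imaginary parts and leaves $\cos(\ell\theta_j)$ in each numerator. This is the same symmetrization used in Proposition \ref{prop04}.

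The only point requiring care is that every denominator is nonzero, i.e.\ $2k-\mu_j\neq0$ for $j\ne0$. This follows from connectedness via $\lambda_j\neq1$. It can also be seen directly: $\mu_j=2k$ would force $\cos(\theta_j)=1$, which means $j\equiv 0 \pmod N$.
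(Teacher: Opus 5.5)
Your proposal is correct and follows essentially the same route as the paper: apply Lemma \ref{lem03} to $C_N^k$ with the Fourier characters as the eigenbasis, exclude $j=0$, and pair $j$ with $N-j$ to replace $e^{-i\ell\theta_j}$ by $\cos(\ell\theta_j)$. Your additional checks are also correct and only make explicit what the paper leaves implicit: connectedness and $2k$-regularity from $N\ge 2k+1$, and $2k-\mu_j\neq 0$ for $j\neq 0$.
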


\begin{proof}
For the reader's convenience, we recall the proof from the spectral formula.
The transition matrix of the simple random walk on $C_N^k$ has eigenvalues
\[
\frac{\mu_j}{2k}
\qquad
(j=0,1,\ldots,N-1).
\]
Applying Lemma \ref{lem03} to the regular graph $C_N^k$, and excluding the
constant eigenfunction corresponding to $j=0$, we obtain
\[
\begin{aligned}
h_{C_N^k}(0,\ell)
&=
\sum_{j=1}^{N-1}
\frac{1-e^{-i\ell\theta_j}}{1-\mu_j/(2k)}.
\end{aligned}
\]
By combining the terms corresponding to $j$ and $-j\bmod N$, the imaginary
parts cancel, and hence
\[
\begin{aligned}
h_{C_N^k}(0,\ell)
&=
\sum_{j=1}^{N-1}
\frac{1-\cos(\ell\theta_j)}{1-\mu_j/(2k)}
\\
&=
2k\sum_{j=1}^{N-1}
\frac{1-\cos(\ell\theta_j)}{2k-\mu_j}.
\end{aligned}
\]
\end{proof}

\begin{proposition}
\label{prop05}
\[
\mathcal S_0
=\frac{2k+r}{2k}h_{C_N^k}(0,\ell).
\]
\end{proposition}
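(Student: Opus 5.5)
The plan is to compute $1-\xi_{j,0}$ explicitly and compare it term by term with the denominator in Proposition \ref{propCyclePowerHitting}. Since $\nu_0=0$, the definition of $\xi_{j,\alpha}$ gives
\[
\xi_{j,0}=\frac{\mu_j+r}{2k+r},
\qquad\text{so}\qquad
1-\xi_{j,0}=\frac{(2k+r)-(\mu_j+r)}{2k+r}=\frac{2k-\mu_j}{2k+r}.
\]
For $1\le j\le N-1$ this quantity is nonzero. Indeed, $\xi_{0,0}$ is the unique eigenvalue $1$ of $P_X$, since $X$ is connected. Equivalently, $\mu_j<2k$ for $j\ne0$, because $\cos(s\theta_j)=1$ for all $s=1,\dots,k$ would force $\theta_j\equiv0$ (already from $s=1$). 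Hence every term of $\mathcal S_0$ is well defined.

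Next we substitute this expression into the definition of $\mathcal S_0$ and pull out the constant factor:
\[
\mathcal S_0=\sum_{j=1}^{N-1}\frac{(2k+r)\bigl(1-\cos(\ell\theta_j)\bigr)}{2k-\mu_j}
=\frac{2k+r}{2k}\cdot 2k\sum_{j=1}^{N-1}\frac{1-\cos(\ell\theta_j)}{2k-\mu_j}.
\]
The last sum multiplied by $2k$ is exactly $h_{C_N^k}(0,\ell)$ by Proposition \ref{propCyclePowerHitting}, which gives the claim.

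There is no real obstacle here. The only point needing care is checking that the denominators $2k-\mu_j$ do not vanish for $j\neq 0$, and the connectivity argument above handles that.
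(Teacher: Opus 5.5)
Your proposal is correct and follows the paper's proof: it computes $1-\xi_{j,0}=(2k-\mu_j)/(2k+r)$, substitutes this into $\mathcal S_0$, and matches the result against Proposition~\ref{propCyclePowerHitting}. Your extra check that $2k-\mu_j\neq 0$ for $1\le j\le N-1$ is valid and adds a detail the paper leaves implicit.
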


\begin{proof}
Since
\[
\xi_{j,0}
=\frac{\mu_j+r}{2k+r},
\]
we have
\[
1-\xi_{j,0}
=\frac{2k-\mu_j}{2k+r}.
\]
Therefore, by Proposition \ref{prop04}, 
\[
\mathcal S_0
=(2k+r)\sum_{j=1}^{N-1}\frac{1-\cos(\ell\theta_j)}{2k-\mu_j}.
\]

On the other hand, by Proposition \ref{propCyclePowerHitting}, 
\[
h_{C_N^k}(0,\ell)
=2k\sum_{j=1}^{N-1}\frac{1-\cos(\ell\theta_j)}{2k-\mu_j}. 
\]
Combining these two identities proves the assertion. 
\end{proof}

Next, we consider the contributions from the nonzero Laplacian eigenspaces. 
For $\nu>0$, we define the Chebyshev-type polynomial
\[
D_{k,\nu}(x)
=k+\frac{\nu}{2}-\sum_{s=1}^{k}T_s\left(\frac{x}{2}\right).
\]
Here, $T_s(x)$ denotes the Chebyshev polynomial of the first kind. 
By the identity
\[
T_s(\cos\theta)=\cos(s\theta), 
\]
we have
\[
2D_{k,\nu}(2\cos\theta_j)
=2k+\nu-\mu_j.
\]

\begin{proposition}
\label{prop06}
For each $\alpha=1, \ldots, t$, 
\[
\mathcal S_\alpha
=\frac{m(2k+r)}{2}\sum_{j=0}^{N-1}\frac{E_\alpha(b,b)-E_\alpha(a,b)\cos(\ell\theta_j)}{D_{k,\nu_\alpha}(2\cos\theta_j)}.
\]
\end{proposition}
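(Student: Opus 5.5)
This is a direct computation that mirrors the proof of Proposition \ref{prop05}. We start from the definition of $\mathcal S_\alpha$ obtained from Proposition \ref{prop04}, namely
\[
\mathcal S_\alpha=m\sum_{j=0}^{N-1}\frac{E_\alpha(b,b)-E_\alpha(a,b)\cos(\ell\theta_j)}{1-\xi_{j,\alpha}},
\]
and rewrite the denominator $1-\xi_{j,\alpha}$ in terms of $D_{k,\nu_\alpha}$.

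First, since $\xi_{j,\alpha}=(\mu_j+r-\nu_\alpha)/(2k+r)$, we get
\[
1-\xi_{j,\alpha}=\frac{2k+\nu_\alpha-\mu_j}{2k+r}.
\]
Next, we apply the Chebyshev identity $T_s(\cos\theta_j)=\cos(s\theta_j)$. Summing over $s=1,\ldots,k$ and using the definition $\mu_j=2\sum_{s=1}^k\cos(s\theta_j)$ gives
\[
2D_{k,\nu_\alpha}(2\cos\theta_j)=2k+\nu_\alpha-\mu_j,
\]
which is the identity recorded just before the statement. Therefore
\[
1-\xi_{j,\alpha}=\frac{2D_{k,\nu_\alpha}(2\cos\theta_j)}{2k+r}.
\]

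We must also check that this quantity is nonzero for every $j$, including $j=0$, so that we never divide by zero. This holds because $\mu_j\le 2k$ and $\nu_\alpha>0$ for $\alpha\ge1$, so $2k+\nu_\alpha-\mu_j\ge\nu_\alpha>0$.

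Substituting this expression into $\mathcal S_\alpha$ turns each term $1/(1-\xi_{j,\alpha})$ into $(2k+r)/\bigl(2D_{k,\nu_\alpha}(2\cos\theta_j)\bigr)$. Pulling the factor $(2k+r)/2$ out of the sum gives the stated formula.

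There is no genuine obstacle here. The only point needing care is the nonvanishing of the denominators at $j=0$, and the argument above handles it.
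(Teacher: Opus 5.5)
Your proposal is correct and is essentially the paper's proof: rewrite $1-\xi_{j,\alpha}$ as $(2k+\nu_\alpha-\mu_j)/(2k+r)$, substitute $2D_{k,\nu_\alpha}(2\cos\theta_j)=2k+\nu_\alpha-\mu_j$, and pull out the factor $(2k+r)/2$. Your extra check that $2k+\nu_\alpha-\mu_j\ge\nu_\alpha>0$ is a harmless addition; the paper proves the same positivity separately in Lemma~\ref{lem09}.
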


\begin{proof}
Since
\[
\xi_{j,\alpha}
=\frac{\mu_j+r-\nu_\alpha}{2k+r},
\]
we have
\[
1-\xi_{j,\alpha}
=\frac{2k+\nu_\alpha-\mu_j}{2k+r}.
\]
Therefore, by Proposition \ref{prop04}, 
\[
\begin{aligned}
\mathcal S_\alpha
&=m(2k+r)\sum_{j=0}^{N-1}\frac{E_\alpha(b,b)-E_\alpha(a,b)\cos(\ell\theta_j)}{2k+\nu_\alpha-\mu_j}.
\end{aligned}
\]
Using
\[
2D_{k,\nu_\alpha}(2\cos\theta_j)
=2k+\nu_\alpha-\mu_j,
\]
we obtain the assertion. 
\end{proof}

Substituting Propositions \ref{prop05} and \ref{prop06} into
\[
h_X((0,a),(\ell,b))
=\mathcal S_0+\sum_{\alpha=1}^{t}\mathcal S_\alpha,
\]
we obtain the following result. 

\begin{corollary}
\label{cor07}
For $a,b\in V(G)$ and $0\le\ell\le N-1$,
\[
\begin{aligned}
h_X((0,a),(\ell,b))
&=\frac{2k+r}{2k}h_{C_N^k}(0,\ell) \\
&\quad+\frac{m(2k+r)}{2}\sum_{\alpha=1}^{t}\sum_{j=0}^{N-1}\frac{E_\alpha(b,b)-E_\alpha(a,b)\cos(\ell\theta_j)}{D_{k,\nu_\alpha}(2\cos\theta_j)}. 
\end{aligned}
\]
\end{corollary}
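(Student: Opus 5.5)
The corollary follows by assembling results already established, so the plan is short. First, invoke Proposition~\ref{prop04}. Together with the definitions of $\mathcal S_0$ and $\mathcal S_\alpha$ at the start of this section, it gives the decomposition
\[
h_X((0,a),(\ell,b))=\mathcal S_0+\sum_{\alpha=1}^{t}\mathcal S_\alpha .
\]
Then substitute each piece separately. For the constant-eigenspace piece, Proposition~\ref{prop05} gives $\mathcal S_0=\frac{2k+r}{2k}h_{C_N^k}(0,\ell)$. For each $\alpha\ge1$, Proposition~\ref{prop06} rewrites $\mathcal S_\alpha$ as $\frac{m(2k+r)}{2}$ times the sum over $j$ with denominators $D_{k,\nu_\alpha}(2\cos\theta_j)$. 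Summing these over $\alpha=1,\dots,t$ and pulling the common factor $\frac{m(2k+r)}{2}$ outside the double sum yields exactly the displayed formula.

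Two points deserve a brief remark for rigor. The first is that every denominator is nonzero. For $\alpha\ge1$ we have $\nu_\alpha>0$ and $\mu_j\le 2k$, so
\[
2D_{k,\nu_\alpha}(2\cos\theta_j)=2k+\nu_\alpha-\mu_j\ge\nu_\alpha>0 .
\]
This is why the inner sum may include $j=0$, unlike the $\alpha=0$ term. In the $\alpha=0$ term, $j=0$ is excluded, and for $j\ne0$ we have $2k-\mu_j>0$ because $N\ge 2k+1$ makes $C_N^k$ connected. The second point is the identity $T_s(\cos\theta)=\cos(s\theta)$, already used to obtain $2D_{k,\nu}(2\cos\theta_j)=2k+\nu-\mu_j$.

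There is no genuine obstacle. The only step that needs care is the bookkeeping of constants, namely checking that $m(2k+r)/(2k+\nu_\alpha-\mu_j)$ equals $\frac{m(2k+r)}{2}\cdot\frac{1}{D_{k,\nu_\alpha}(2\cos\theta_j)}$, which Proposition~\ref{prop06} has already done.
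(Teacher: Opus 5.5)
Your proposal is correct and matches the paper's proof: the paper also obtains Corollary~\ref{cor07} by substituting Propositions~\ref{prop05} and~\ref{prop06} into the decomposition $h_X((0,a),(\ell,b))=\mathcal S_0+\sum_{\alpha=1}^{t}\mathcal S_\alpha$ from Proposition~\ref{prop04}. One small quibble on your side remark: $2k-\mu_j>0$ for $j\neq0$ holds simply because $\cos\theta_j<1$ when $1\le j\le N-1$, and $C_N^k$ is connected for every $N$ (the hypothesis $N\ge 2k+1$ is what makes it simple and $2k$-regular); this does not affect the argument.
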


Corollary \ref{cor07} shows that the average hitting time is decomposed into a component proportional to the average hitting time on $C_N^k$ and spectral correction terms arising from the nonzero Laplacian eigenspaces of $G$. 

Each correction term is described by the corresponding Laplacian eigenvalue $\nu_\alpha$ and the Chebyshev-type polynomial
\[
D_{k,\nu_\alpha}(x)
=k+\frac{\nu_\alpha}{2}-\sum_{s=1}^{k}T_s\left(\frac{x}{2}\right).
\]

Thus, the average hitting time on $C_N^k\square G$ is decomposed into a component arising from $C_N^k$ and spectral correction terms arising from the nonzero Laplacian eigenspaces of $G$. 
In the next section, we transform each correction term into a Green-type representation in terms of the roots of $D_{k,\nu_\alpha}(x)$,
and then derive a product representation involving second-order linear recurrence sequences.

\section{Explicit formulas for hitting times}
\label{sec04}
In this section, we transform the spectral correction terms in Corollary \ref{cor07} and derive a Green-type representation
and a product representation involving second-order linear recurrence sequences. 

\subsection{Partial fraction decomposition}
For each $\alpha=1,\ldots,t$, consider
\[
D_{k,\nu_\alpha}(x)
=k+\frac{\nu_\alpha}{2}-\sum_{s=1}^{k}T_s\left(\frac{x}{2}\right).
\]
The leading term of the Chebyshev polynomial $T_k(x)$ of the first kind is $2^{k-1}x^k$. 
Therefore, the leading term of
\[
T_k\left(\frac{x}{2}\right)
\]
is $x^k/2$. 
On the other hand, for $s<k$, the degree of $T_s(x/2)$ is $s<k$. 
Hence, $D_{k,\nu_\alpha}(x)$ is a polynomial of degree $k$ with leading coefficient $-1/2$.

Therefore, over the complex numbers, 
$D_{k,\nu_\alpha}(x)$ has $k$ roots counted with multiplicity. 
In what follows, we assume that all of these roots are simple, 
and denote them by $\phi_{\alpha,1}, \ldots, \phi_{\alpha,k}$. 

\begin{remark}
\label{remSimpleRoots}
The simplicity assumption on the roots of $D_{k,\nu_\alpha}(x)$ is used only
to obtain the partial fraction decomposition in Proposition \ref{prop09}
in the simple form below.
The spectral formula in Corollary \ref{cor07} and the spanning tree formula
in Proposition \ref{prop19} do not require this assumption.
When multiple roots occur, analogous formulas can be obtained by using
higher-order partial fractions.
In this paper, we restrict ourselves to the case where all roots are simple.
\end{remark}

We first examine the locations of these roots. 

\begin{lemma}
\label{lem09}
Let $\nu>0$. 
Then
\[
D_{k,\nu}(x)>0 \qquad (-2 \le x \le2).
\]
Therefore, $D_{k,\nu}(x)$ has no roots in the interval $[-2,2]$. 
\end{lemma}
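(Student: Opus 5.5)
For $x\in[-2,2]$ we can write $x=2\cos\theta$ for some real $\theta\in[0,\pi]$. The plan is to use this substitution together with the identity $T_s(\cos\theta)=\cos(s\theta)$, which turns the statement into an elementary trigonometric inequality. Concretely,
\[
D_{k,\nu}(2\cos\theta)=k+\frac{\nu}{2}-\sum_{s=1}^{k}\cos(s\theta)
=\frac{\nu}{2}+\sum_{s=1}^{k}\bigl(1-\cos(s\theta)\bigr).
\]
This is the same computation that gave $2D_{k,\nu}(2\cos\theta_j)=2k+\nu-\mu_j$ in Section~\ref{sec03}, now carried out for arbitrary real $\theta$ rather than only the lattice angles $\theta_j$.

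Each summand satisfies $1-\cos(s\theta)\ge 0$, so the right-hand side is at least $\nu/2$. Since $\nu>0$, this gives $D_{k,\nu}(x)\ge \nu/2>0$ for every $x\in[-2,2]$. In particular $D_{k,\nu}$ does not vanish on $[-2,2]$, so it has no roots in that interval.

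The only point to check is that the substitution $x=2\cos\theta$ covers the whole closed interval $[-2,2]$. This holds because $\theta\mapsto 2\cos\theta$ maps $[0,\pi]$ onto $[-2,2]$. Apart from this the argument is immediate, and the bound $D_{k,\nu}\ge\nu/2$ is in fact uniform on the interval.
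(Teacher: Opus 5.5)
Your proof is correct and essentially the paper's argument: substitute $x=2\cos\theta$, apply $T_s(\cos\theta)=\cos(s\theta)$, and rewrite $D_{k,\nu}(2\cos\theta)$ as $\frac{\nu}{2}+\sum_{s=1}^{k}(1-\cos(s\theta))\ge \frac{\nu}{2}>0$. You also state explicitly that $\theta\mapsto 2\cos\theta$ maps $[0,\pi]$ onto $[-2,2]$, a step the paper leaves implicit.
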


\begin{proof}
Writing $x=2\cos\theta$, we have 
\[
\begin{aligned}
D_{k,\nu}(2\cos\theta)
&=k+\frac{\nu}{2}-\sum_{s=1}^{k}\cos(s\theta) \\
&=\frac{\nu}{2}+\sum_{s=1}^{k}\left(1-\cos(s\theta)\right)
>0. 
\end{aligned}
\]
\end{proof}

\begin{proposition}
\label{prop09}
For each $\alpha=1, \ldots, t$, 
\[
\frac1{D_{k,\nu_\alpha}(x)}
=\sum_{c=1}^{k}\frac1{D_{k,\nu_\alpha}'(\phi_{\alpha,c})}\frac1{x-\phi_{\alpha,c}}. 
\]
\end{proposition}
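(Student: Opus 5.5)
This is the standard partial fraction decomposition of $1/D$ for a polynomial $D$ with simple roots, so the plan is to verify it directly. Write $D=D_{k,\nu_\alpha}$ and $\phi_c=\phi_{\alpha,c}$. Earlier in this subsection we showed that $D$ has degree $k$ with leading coefficient $-1/2$. Under the standing assumption that its roots $\phi_1,\ldots,\phi_k$ are simple, this gives the factorization
\[
D(x)=-\frac12\prod_{c=1}^{k}(x-\phi_c).
\]
Simplicity of the roots also gives $D'(\phi_c)\ne0$ for every $c$, so each coefficient $1/D'(\phi_c)$ on the right-hand side is well defined.

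\emph{Step 1: reduce to a polynomial identity.} Define
\[
Q(x)=\sum_{c=1}^{k}\frac{1}{D'(\phi_c)}\,\frac{D(x)}{x-\phi_c}.
\]
Each quotient $D(x)/(x-\phi_c)=-\frac12\prod_{c'\ne c}(x-\phi_{c'})$ is a polynomial of degree $k-1$. Hence $Q$ is a polynomial of degree at most $k-1$. The claimed identity is equivalent to $Q\equiv1$, after multiplying through by $D(x)$ and restricting to $x\notin\{\phi_1,\ldots,\phi_k\}$.

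\emph{Step 2: evaluate $Q$ at the roots.} At $x=\phi_{c_0}$, every term with $c\ne c_0$ vanishes, because its product still contains the factor $(x-\phi_{c_0})$. The remaining term equals
\[
\frac{1}{D'(\phi_{c_0})}\lim_{x\to\phi_{c_0}}\frac{D(x)}{x-\phi_{c_0}}=\frac{D'(\phi_{c_0})}{D'(\phi_{c_0})}=1,
\]
where the limit is the derivative because $D(\phi_{c_0})=0$. Equivalently, differentiating the product formula gives $D'(\phi_{c_0})=-\frac12\prod_{c'\ne c_0}(\phi_{c_0}-\phi_{c'})$ directly. So $Q-1$ is a polynomial of degree at most $k-1$ with $k$ distinct roots, and therefore $Q\equiv1$.

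\emph{Step 3: conclude.} Dividing $Q(x)=1$ by $D(x)$ gives the asserted formula for all $x$ outside the root set. In particular, by Lemma \ref{lem09}, $D$ does not vanish on $[-2,2]$, so the formula holds at every point $x=2\cos\theta_j$ used later.

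There is no real obstacle here. The only point needing care is the role of simplicity: it guarantees both $D'(\phi_c)\neq 0$ and that the $k$ interpolation nodes are distinct, which is exactly what Remark \ref{remSimpleRoots} refers to.
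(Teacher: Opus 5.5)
Your proof is correct and rests on the same key computation as the paper: the coefficient at $\phi_{\alpha,c}$ is $\lim_{x\to\phi_{\alpha,c}}(x-\phi_{\alpha,c})/D_{k,\nu_\alpha}(x)=1/D_{k,\nu_\alpha}'(\phi_{\alpha,c})$. The only difference is that the paper assumes the simple-root partial fraction decomposition exists and then reads off its coefficients by this limit, whereas your interpolation argument (the polynomial $Q-1$ has degree at most $k-1$ but $k$ distinct zeros) proves existence and the coefficient values at the same time.
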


\begin{proof}
By the partial fraction decomposition for simple roots, we can write 
\[
\frac1{D_{k,\nu_\alpha}(x)}
=\sum_{c=1}^{k}\frac{A_{\alpha,c}}{x-\phi_{\alpha,c}}.
\]
Multiplying both sides by $x-\phi_{\alpha,c}$ and letting $x\to\phi_{\alpha,c}$, 
we obtain
\[
A_{\alpha,c}
=\frac1{D_{k,\nu_\alpha}'(\phi_{\alpha,c})}.
\]
\end{proof}

\subsection{Green-type representation}
For each $\alpha=1, \ldots, t$ and $c=1, \ldots, k$, consider the equation
\[
z+z^{-1}
=\phi_{\alpha,c}.
\]
This equation is equivalent to
\[
z^2-\phi_{\alpha,c}z+1=0. 
\]
Therefore, it has two solutions over the complex numbers, 
and the product of these solutions is $1$. 
In particular, neither solution is zero. 

We choose one of these two solutions and denote it by $\rho_{\alpha,c} \in \mathbb C$. 
Then
\[
\rho_{\alpha,c}+\rho_{\alpha,c}^{-1}
=\phi_{\alpha,c},
\]
and the other solution is $\rho_{\alpha,c}^{-1}$. 

Furthermore, by Lemma \ref{lem09}, 
\[
\phi_{\alpha,c}\notin[-2, 2]. 
\]
It follows that
\[
\rho_{\alpha,c} \ne \rho_{\alpha,c}^{-1}, \qquad
\rho_{\alpha,c}^{N} \ne 1. 
\]

Indeed, suppose that
\[
\rho_{\alpha,c}=\rho_{\alpha,c}^{-1}. 
\]
Then
\[
\rho_{\alpha,c}^{2}=1,
\]
and hence
\[
\rho_{\alpha,c}=\pm1. 
\]
Therefore,
\[
\phi_{\alpha,c}
=\rho_{\alpha,c}+\rho_{\alpha,c}^{-1}
=\pm2,
\]
which contradicts $\phi_{\alpha,c}\notin[-2,2]$. 

Next, suppose that 
\[
\rho_{\alpha,c}^{N}=1. 
\]
Then $\rho_{\alpha,c}$ lies on the unit circle. 
Therefore, for some real number $\vartheta$, we can write
\[
\rho_{\alpha,c}=e^{i\vartheta}. 
\]
In this case,
\[
\begin{aligned}
\phi_{\alpha,c}
&=\rho_{\alpha,c}+\rho_{\alpha,c}^{-1} \\
&=e^{i\vartheta}+e^{-i\vartheta} \\
&=2\cos\vartheta \in [-2,2], 
\end{aligned}
\]
which again contradicts $\phi_{\alpha,c}\notin[-2,2]$. 

Thus,
\[
\rho_{\alpha,c}-\rho_{\alpha,c}^{-1} \ne 0, \qquad
\rho_{\alpha,c}^{N}-1 \ne 0,
\]
and none of the denominators appearing in the Green-type representation below is zero.

\begin{lemma}
\label{lem10}
Let $\phi=\rho+\rho^{-1}$, and suppose that 
\[
\rho \ne 0, \qquad
\rho \ne \rho^{-1}, \qquad
\rho^N \ne 1. 
\]
Then, for $0 \le \ell \le N-1$, 
\[
\sum_{j=0}^{N-1}\frac{e^{i\ell\theta_j}}{\phi-2\cos\theta_j}
=\frac{N\left(\rho^\ell+\rho^{N-\ell}\right)}{(\rho-\rho^{-1})(\rho^N-1)}. 
\]
\end{lemma}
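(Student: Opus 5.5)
Write $\omega_j=e^{i\theta_j}$, so that $2\cos\theta_j=\omega_j+\omega_j^{-1}$. The first step factors the denominator:
\[
\phi-2\cos\theta_j=\rho+\rho^{-1}-\omega_j-\omega_j^{-1}
=\frac{(\rho-\omega_j)(1-\rho\,\omega_j^{-1})}{\rho}\cdot(-1)\cdot(-1),
\]
or more usefully
\[
\phi-2\cos\theta_j=-\omega_j^{-1}(\omega_j-\rho)(\omega_j-\rho^{-1}).
\]
None of these factors vanishes, because $\rho^N\ne1$ and $\rho^{-N}\ne1$ imply that $\rho^{\pm1}$ is not an $N$-th root of unity. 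Since $\rho\ne\rho^{-1}$, I would then apply a partial fraction decomposition in the variable $\omega=\omega_j$:
\[
\frac{1}{\phi-2\cos\theta_j}=\frac{-\omega}{(\omega-\rho)(\omega-\rho^{-1})}
=\frac{1}{\rho-\rho^{-1}}\left(\frac{\rho^{-1}}{\omega-\rho^{-1}}-\frac{\rho}{\omega-\rho}\right).
\]
This identity is verified by clearing denominators: the numerator becomes $\rho^{-1}(\omega-\rho)-\rho(\omega-\rho^{-1})=-(\rho-\rho^{-1})\omega$.

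The problem thus reduces to the standard root-of-unity sum
\[
F(\beta):=\sum_{j=0}^{N-1}\frac{\omega_j^{\ell}}{\omega_j-\beta},\qquad 0\le\ell\le N-1,\ \beta^N\ne1.
\]
To evaluate it, I would expand $\frac{1}{\omega-\beta}=\frac{\omega^{N-1}+\beta\omega^{N-2}+\cdots+\beta^{N-1}}{\omega^N-\beta^N}$ and use $\omega_j^N=1$. Then
\[
\frac{1}{\omega_j-\beta}=\frac{1}{1-\beta^N}\sum_{p=0}^{N-1}\beta^{p}\omega_j^{N-1-p}.
\]
Multiplying by $\omega_j^\ell$ and summing over $j$ with the orthogonality relation $\sum_j\omega_j^{n}=N\,[N\mid n]$ leaves only the term $p\equiv \ell-1\pmod N$. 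This gives
\[
F(\beta)=\frac{N\beta^{\ell-1}}{1-\beta^N}\quad(1\le\ell\le N-1),\qquad F(\beta)=\frac{N\beta^{N-1}}{1-\beta^N}\quad(\ell=0).
\]
The case $\ell=0$ needs separate bookkeeping, since there $p=N-1$. This is the only delicate point, and I expect it to be the main obstacle: the index bookkeeping at the boundary value $\ell=0$, which must still produce the symmetric form $\rho^\ell+\rho^{N-\ell}$.

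Finally, I would combine the two pieces. For $1\le\ell\le N-1$,
\[
\sum_j\frac{\omega_j^\ell}{\phi-2\cos\theta_j}=\frac{N}{\rho-\rho^{-1}}\left(\frac{\rho^{-\ell}}{1-\rho^{-N}}-\frac{\rho^{\ell}}{1-\rho^{N}}\right).
\]
Multiplying the first fraction's numerator and denominator by $\rho^N$ gives $\frac{\rho^{N-\ell}}{\rho^N-1}$, and the second term equals $+\frac{\rho^\ell}{\rho^N-1}$. Their sum is $\frac{N(\rho^\ell+\rho^{N-\ell})}{(\rho-\rho^{-1})(\rho^N-1)}$, as claimed.

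For $\ell=0$ the same computation gives
\[
\frac{N}{\rho-\rho^{-1}}\left(\frac{\rho^{-N}}{1-\rho^{-N}}-\frac{\rho^{N}}{1-\rho^N}\right)=\frac{N}{\rho-\rho^{-1}}\cdot\frac{1+\rho^N}{\rho^N-1},
\]
which agrees with the formula at $\ell=0$. I would write this check out explicitly rather than rely on the general case.
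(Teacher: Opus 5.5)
Your proof is correct and is essentially the paper's argument. Your partial fraction decomposition is exactly the paper's $\frac{1}{\rho-\rho^{-1}}\bigl(\frac{1}{1-\rho^{-1}\omega_j}-\frac{1}{1-\rho\,\omega_j}\bigr)$ rewritten, and you then use the same finite geometric series plus root-of-unity orthogonality, treating the case $\ell=0$ separately. The only slip is your first display: $\rho^{-1}(\rho-\omega_j)(1-\rho\,\omega_j^{-1})$ does not equal $\phi-2\cos\theta_j$; the correct factorization is $\rho^{-1}(\rho-\omega_j)(\rho-\omega_j^{-1})$. You immediately replace it with the correct $-\omega_j^{-1}(\omega_j-\rho)(\omega_j-\rho^{-1})$, so the argument is unaffected.
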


\begin{proof}
Put $\omega=e^{2\pi i/N}$.
Then
\[
\frac1{\phi-2\cos\theta_j}
=\frac1{\rho-\rho^{-1}}\left(\frac1{1-\rho^{-1}\omega^j}-\frac1{1-\rho\omega^j}\right).
\]

Moreover, when $a^N \ne 1$, 
\[
\sum_{j=0}^{N-1}\frac{\omega^{j\ell}}{1-a\omega^j}
=
\begin{cases}
\dfrac{N}{1-a^N},
&\ell=0,\\[3mm]
\dfrac{Na^{N-\ell}}{1-a^N},
&1\le\ell\le N-1.
\end{cases}
\]
Indeed, since
\[
\frac{1-a^N}{1-a\omega^j}
=\sum_{p=0}^{N-1}a^p\omega^{jp},
\]
we have
\[
\begin{aligned}
(1-a^N)\sum_{j=0}^{N-1}\frac{\omega^{j\ell}}{1-a\omega^j}
&=\sum_{p=0}^{N-1}a^p\sum_{j=0}^{N-1}\omega^{j(p+\ell)}.
\end{aligned}
\]
Here,
\[
\sum_{j=0}^{N-1}\omega^{j(p+\ell)}
=
\begin{cases}
N,
&p+\ell\equiv0\pmod N,\\
0,
&p+\ell\not\equiv0\pmod N.
\end{cases}
\]
Therefore, when $\ell=0$, only the term with $p=0$ remains, whereas when $1\le\ell\le N-1$,
only the term with $p=N-\ell$ remains. 

Applying this identity to $a=\rho^{-1}$ and $a=\rho$ proves the assertion. 
\end{proof}

For each $\alpha=1,\ldots,t$ and $c=1,\ldots,k$, define
\[
\mathcal G_{\alpha,c}(\ell)
=\frac{N\left(\rho_{\alpha,c}^{\ell}+\rho_{\alpha,c}^{N-\ell}\right)}{\left(\rho_{\alpha,c}-\rho_{\alpha,c}^{-1}\right)\left(\rho_{\alpha,c}^{N}-1\right)}. 
\]
This value remains unchanged when $\rho_{\alpha,c}$ is replaced by $\rho_{\alpha,c}^{-1}$. 

\begin{proposition}
\label{prop11}
For each $\alpha=1, \ldots, t$ and $0 \le \ell \le N-1$, 
\[
\sum_{j=0}^{N-1}\frac{\cos(\ell\theta_j)}{D_{k,\nu_\alpha}(2\cos\theta_j)}
=-\sum_{c=1}^{k}\frac{\mathcal G_{\alpha,c}(\ell)}{D_{k,\nu_\alpha}'(\phi_{\alpha,c})}.
\]
\end{proposition}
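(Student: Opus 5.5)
The plan is to combine the partial fraction decomposition of Proposition \ref{prop09} with the Fourier sum of Lemma \ref{lem10}, and then pass from exponentials to cosines by a symmetry argument.

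First, substitute $x=2\cos\theta_j$ into Proposition \ref{prop09}. This is legitimate because, by Lemma \ref{lem09}, no root $\phi_{\alpha,c}$ lies in $[-2,2]$, so every $2\cos\theta_j$ differs from every root. We get
\[
\frac{1}{D_{k,\nu_\alpha}(2\cos\theta_j)}
=-\sum_{c=1}^{k}\frac{1}{D_{k,\nu_\alpha}'(\phi_{\alpha,c})}\,\frac{1}{\phi_{\alpha,c}-2\cos\theta_j},
\]
where the sign comes from writing $x-\phi_{\alpha,c}=-(\phi_{\alpha,c}-x)$. Multiply by $\cos(\ell\theta_j)$, sum over $j$, and interchange the two finite sums. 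The problem then reduces to showing, for each $c$,
\[
\sum_{j=0}^{N-1}\frac{\cos(\ell\theta_j)}{\phi_{\alpha,c}-2\cos\theta_j}=\mathcal G_{\alpha,c}(\ell).
\]

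Second, to establish this identity, I would write $\cos(\ell\theta_j)=\tfrac12\bigl(e^{i\ell\theta_j}+e^{-i\ell\theta_j}\bigr)$. The denominator $\phi-2\cos\theta_j$ is invariant under $j\mapsto -j \bmod N$, and this reindexing maps $\theta_j$ to $-\theta_j$ modulo $2\pi$. Hence the sum with $e^{-i\ell\theta_j}$ equals the sum with $e^{i\ell\theta_j}$, so the cosine sum equals the exponential sum. Lemma \ref{lem10} evaluates the exponential sum with $\rho=\rho_{\alpha,c}$. Its hypotheses $\rho\neq0$, $\rho\neq\rho^{-1}$ and $\rho^N\neq1$ were verified just before the definition of $\mathcal G_{\alpha,c}$, and the lemma gives exactly $\mathcal G_{\alpha,c}(\ell)$. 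Note that $\phi_{\alpha,c}$ may be complex, but this causes no difficulty: Lemma \ref{lem10} is purely algebraic, and the $j\leftrightarrow -j$ symmetry does not use reality.

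Third, I would check the edge case $\ell=0$. There Lemma \ref{lem10} gives $\rho^0+\rho^N$, and the formula for $\mathcal G_{\alpha,c}(0)$ agrees, since the lemma's proof treats $\ell=0$ separately but arrives at the same expression $N(1+\rho^N)/((\rho-\rho^{-1})(\rho^N-1))$. I would verify this briefly.

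The main obstacle is only bookkeeping the overall minus sign, so I would double-check it against Proposition \ref{prop09}.
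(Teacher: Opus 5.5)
Your proposal is correct and follows essentially the same route as the paper: you use the partial fractions of Proposition \ref{prop09} with the sign flip $x-\phi_{\alpha,c}=-(\phi_{\alpha,c}-x)$, then evaluate each $\sum_j \cos(\ell\theta_j)/(\phi_{\alpha,c}-2\cos\theta_j)$ with Lemma \ref{lem10}. The one difference is cosmetic. You handle the $e^{-i\ell\theta_j}$ half by reindexing $j\mapsto -j \bmod N$. The paper instead writes $e^{-i\ell\theta_j}=e^{i(N-\ell)\theta_j}$, applies Lemma \ref{lem10} at $N-\ell$, and uses $\mathcal G_{\alpha,c}(N-\ell)=\mathcal G_{\alpha,c}(\ell)$, which forces it to treat $\ell=0$ separately; in your version that case needs no separate treatment.
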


\begin{proof}
By Proposition \ref{prop09},
\[
\begin{aligned}
\sum_{j=0}^{N-1}\frac{\cos(\ell\theta_j)}{D_{k,\nu_\alpha}(2\cos\theta_j)}
&=-\sum_{c=1}^{k}\frac1{D_{k,\nu_\alpha}'(\phi_{\alpha,c})}\sum_{j=0}^{N-1}\frac{\cos(\ell\theta_j)}{\phi_{\alpha,c}-2\cos\theta_j}.
\end{aligned}
\]

When $\ell=0$, Lemma \ref{lem10} gives
\[
\sum_{j=0}^{N-1}
\frac1{\phi_{\alpha,c}-2\cos\theta_j}
=\mathcal G_{\alpha,c}(0).
\]

When $1\le\ell\le N-1$, using
\[
\cos(\ell\theta_j)
=\frac{e^{i\ell\theta_j}+e^{-i\ell\theta_j}}{2}
\]
and
\[
e^{-i\ell\theta_j}
=e^{i(N-\ell)\theta_j},
\]
we obtain
\[
\begin{aligned}
\sum_{j=0}^{N-1}\frac{\cos(\ell\theta_j)}{\phi_{\alpha,c}-2\cos\theta_j}
&=\frac12\left\{\mathcal G_{\alpha,c}(\ell)+\mathcal G_{\alpha,c}(N-\ell)\right\} \\
&=\mathcal G_{\alpha,c}(\ell).
\end{aligned}
\]
Therefore, the assertion follows. 
\end{proof}

We write the correction term in Corollary \ref{cor07} as
\[
\begin{aligned}
\mathcal R_{N,k,G}(\ell;a,b)
&=\frac{m(2k+r)}{2}\sum_{\alpha=1}^{t}\sum_{j=0}^{N-1}\frac{E_\alpha(b,b)-E_\alpha(a,b)\cos(\ell\theta_j)}{D_{k,\nu_\alpha}(2\cos\theta_j)}. 
\end{aligned}
\]

\begin{proposition}
\label{prop12}
For $a,b\in V(G)$ and $0\le\ell\le N-1$, we have
\[
\begin{aligned}
\mathcal R_{N,k,G}(\ell;a,b)
&=-\frac{m(2k+r)}{2}\sum_{\alpha=1}^{t}\sum_{c=1}^{k}\frac{E_\alpha(b,b)\mathcal G_{\alpha,c}(0)-E_\alpha(a,b)\mathcal G_{\alpha,c}(\ell)}{D_{k,\nu_\alpha}'(\phi_{\alpha,c})}. 
\end{aligned}
\]
\end{proposition}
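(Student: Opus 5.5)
The plan is to start from the definition of $\mathcal R_{N,k,G}(\ell;a,b)$ as the correction term in Corollary \ref{cor07}. By linearity, the inner sum over $j$ splits into two parts:
\[
\sum_{j=0}^{N-1}\frac{E_\alpha(b,b)-E_\alpha(a,b)\cos(\ell\theta_j)}{D_{k,\nu_\alpha}(2\cos\theta_j)}
=E_\alpha(b,b)\sum_{j=0}^{N-1}\frac{1}{D_{k,\nu_\alpha}(2\cos\theta_j)}
-E_\alpha(a,b)\sum_{j=0}^{N-1}\frac{\cos(\ell\theta_j)}{D_{k,\nu_\alpha}(2\cos\theta_j)}.
\]
Both sums are evaluated with Proposition \ref{prop11}. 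For the first, apply it at $\ell=0$, where $\cos(0\cdot\theta_j)=1$; this gives $-\sum_{c}\mathcal G_{\alpha,c}(0)/D_{k,\nu_\alpha}'(\phi_{\alpha,c})$. For the second, apply it directly, which gives $-\sum_c\mathcal G_{\alpha,c}(\ell)/D_{k,\nu_\alpha}'(\phi_{\alpha,c})$.

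Substituting both into the definition and pulling the common factor $-1$ and the sum over $c$ outside produces exactly the claimed expression, with the prefactor $m(2k+r)/2$ unchanged. The order of summation over $\alpha$ and $c$ is immaterial because both sums are finite.

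There is no real obstacle; the only points to check are the following.

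\emph{The $\ell=0$ case of Proposition \ref{prop11}.} Its proof treats $\ell=0$ separately via Lemma \ref{lem10}, so the case is covered.

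\emph{Nonvanishing denominators.} Each $D_{k,\nu_\alpha}'(\phi_{\alpha,c})$ is nonzero under the standing simple-root assumption.

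\emph{Well-definedness of $\mathcal G_{\alpha,c}$.} The denominators $\rho_{\alpha,c}-\rho_{\alpha,c}^{-1}$ and $\rho_{\alpha,c}^N-1$ are nonzero by Lemma \ref{lem09} and the argument following it, so each $\mathcal G_{\alpha,c}$ is well defined.
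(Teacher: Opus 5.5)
Your proposal is correct and matches the paper's proof: you split the $j$-sum by linearity, evaluate both pieces with Proposition \ref{prop11} (once at $\ell=0$, once at general $\ell$), and recombine under the common factor $-m(2k+r)/2$. Your side checks on nonvanishing denominators and the $\ell=0$ case are consistent with what the paper establishes beforehand.
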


\begin{proof}
By the definition of the correction term, 
\[
\begin{aligned}
\mathcal R_{N,k,G}(\ell;a,b)
&=\frac{m(2k+r)}{2}\sum_{\alpha=1}^{t}E_\alpha(b,b)\sum_{j=0}^{N-1}\frac1{D_{k,\nu_\alpha}(2\cos\theta_j)} \\
&\quad-\frac{m(2k+r)}{2}\sum_{\alpha=1}^{t}E_\alpha(a,b)\sum_{j=0}^{N-1}\frac{\cos(\ell\theta_j)}{D_{k,\nu_\alpha}(2\cos\theta_j)}.
\end{aligned}
\]

Taking $\ell=0$ in Proposition \ref{prop11}, we obtain
\[
\sum_{j=0}^{N-1}\frac1{D_{k,\nu_\alpha}(2\cos\theta_j)}
=-\sum_{c=1}^{k}\frac{\mathcal G_{\alpha,c}(0)}{D_{k,\nu_\alpha}'(\phi_{\alpha,c})}.
\]
Moreover, for a general $\ell$, 
\[
\sum_{j=0}^{N-1}\frac{\cos(\ell\theta_j)}{D_{k,\nu_\alpha}(2\cos\theta_j)}
=-\sum_{c=1}^{k}\frac{\mathcal G_{\alpha,c}(\ell)}{D_{k,\nu_\alpha}'(\phi_{\alpha,c})}.
\]

Substituting these identities into the expression for the correction term, we obtain
\[
\begin{aligned}
\mathcal R_{N,k,G}(\ell;a,b)
&=-\frac{m(2k+r)}{2}\sum_{\alpha=1}^{t}\sum_{c=1}^{k}\frac{E_\alpha(b,b)\mathcal G_{\alpha,c}(0)}{D_{k,\nu_\alpha}'(\phi_{\alpha,c})} \\
&\quad+\frac{m(2k+r)}{2}\sum_{\alpha=1}^{t}\sum_{c=1}^{k}\frac{E_\alpha(a,b)\mathcal G_{\alpha,c}(\ell)}{D_{k,\nu_\alpha}'(\phi_{\alpha,c})}. 
\end{aligned}
\]
Combining these terms proves the assertion. 
\end{proof}

Therefore, the correction term for any two vertices can be expressed as a finite sum of Green-type terms corresponding to the nonzero Laplacian eigenspaces of $G$.

\subsection{Recurrence representation}

We now consider two vertices $(0,a)$ and $(\ell,a)$ having the same $G$-coordinate. 
Since each $\rho_{\alpha,c}$ is a nonzero complex number, 
it has a square root over the complex numbers. 
Therefore, for each $\alpha=1, \ldots, t$ and $c=1, \ldots, k$, we choose $\eta_{\alpha,c} \in \mathbb C$ satisfying
\[
\rho_{\alpha,c}
=\eta_{\alpha,c}^{\,2},
\]
and define
\[
\gamma_{\alpha,c}
=\eta_{\alpha,c}+\eta_{\alpha,c}^{-1}.
\]
Then
\[
\begin{aligned}
\gamma_{\alpha,c}^{\,2}
&=\eta_{\alpha,c}^{\,2}+2+\eta_{\alpha,c}^{-2} \\
&=\rho_{\alpha,c}+\rho_{\alpha,c}^{-1}+2 \\
&=\phi_{\alpha,c}+2.
\end{aligned}
\]

By Lemma \ref{lem09}, $\phi_{\alpha,c}\ne-2$, and hence 
\[
\gamma_{\alpha,c}\ne0.
\]
Moreover, since $\rho_{\alpha,c}\ne1$, we have
\[
\eta_{\alpha,c}\ne\pm1, \qquad
\eta_{\alpha,c}-\eta_{\alpha,c}^{-1}
\ne 0.
\]

\begin{definition}
\label{def13}
For each $\alpha=1, \ldots, t$ and $c=1, \ldots, k$, define the sequence $\{V_n^{(\alpha,c)}\}_{n\ge0}$ by
\[
V_0^{(\alpha,c)}=0, \qquad
V_1^{(\alpha,c)}=1, \qquad
V_{n+2}^{(\alpha,c)}
=\gamma_{\alpha,c}V_{n+1}^{(\alpha,c)}-V_n^{(\alpha,c)}.
\]
\end{definition}

\begin{lemma}
\label{lem14}
For each $n\ge0$, we have
\[
V_n^{(\alpha,c)}
=\frac{\eta_{\alpha,c}^{\,n}-\eta_{\alpha,c}^{-n}}{\eta_{\alpha,c}-\eta_{\alpha,c}^{-1}}. 
\]
\end{lemma}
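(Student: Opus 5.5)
We prove the closed form by induction on $n$, using only the defining recurrence of Definition \ref{def13} and the relation $\gamma_{\alpha,c}=\eta_{\alpha,c}+\eta_{\alpha,c}^{-1}$. Throughout we fix $\alpha,c$ and write $\eta=\eta_{\alpha,c}$, $\gamma=\gamma_{\alpha,c}$, and
\[
W_n=\frac{\eta^{n}-\eta^{-n}}{\eta-\eta^{-1}}.
\]
This is well defined because $\eta-\eta^{-1}\ne0$, as noted just before Definition \ref{def13}.

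\textbf{Initial values.} We have $W_0=0$ and $W_1=1$, so $W_n$ agrees with $V_n^{(\alpha,c)}$ for $n=0,1$.

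\textbf{Recurrence.} It remains to show that $W_n$ satisfies $W_{n+2}=\gamma W_{n+1}-W_n$. Expanding directly,
\[
(\eta+\eta^{-1})(\eta^{n+1}-\eta^{-n-1})=\eta^{n+2}-\eta^{-n-2}+\eta^{n}-\eta^{-n}.
\]
Subtracting $\eta^n-\eta^{-n}$ from both sides and dividing by $\eta-\eta^{-1}$ gives
\[
\gamma W_{n+1}-W_n=W_{n+2}.
\]
Equivalently, $\eta$ and $\eta^{-1}$ are the two roots of the characteristic polynomial $z^2-\gamma z+1$, so any linear combination of $\eta^n$ and $\eta^{-n}$ satisfies the recurrence.

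\textbf{Conclusion.} Two sequences that satisfy the same second-order recurrence and share the same first two terms coincide for all $n\ge0$, by strong induction on $n$. Hence $V_n^{(\alpha,c)}=W_n$ for every $n\ge0$.

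No step here is a real obstacle. The only point requiring care is that the denominator $\eta-\eta^{-1}$ is nonzero, which was already established from $\rho_{\alpha,c}\ne1$.
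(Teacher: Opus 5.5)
Your proof is correct and follows the paper's argument: define $W_n$ as the closed form, check $W_0=0$, $W_1=1$ and $W_{n+2}=\gamma_{\alpha,c}W_{n+1}-W_n$, and conclude by uniqueness of solutions to the recurrence. You also write out the algebraic check of the recurrence and note that $\eta_{\alpha,c}-\eta_{\alpha,c}^{-1}\neq 0$; the paper leaves the first implicit and records the second just before Definition \ref{def13}.
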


\begin{proof}
Let the right-hand side be denoted by $W_n$. 
Then
\[
W_0=0, \qquad
W_1=1, \qquad
W_{n+2}=\gamma_{\alpha,c}W_{n+1}-W_n.
\]
Therefore, by the uniqueness of the sequence satisfying the recurrence relation, 
\[
W_n=V_n^{(\alpha,c)}.
\]
\end{proof}

Furthermore, since $\rho_{\alpha,c}^{N} \neq 1$, 
we have $\eta_{\alpha,c}^{\,2N} \neq 1$, 
and hence $V_N^{(\alpha,c)} \neq 0$. 
Moreover, by the assumption that all roots are simple, 
\[
D_{k,\nu_\alpha}'(\phi_{\alpha,c})\ne0. 
\]
Therefore, none of the denominators appearing below is zero. 

\begin{proposition}
\label{prop15}
For each $\alpha=1, \ldots, t$, $c=1, \ldots, k$, and $0 \le \ell \le N-1$, we have
\[
\mathcal G_{\alpha,c}(0)-\mathcal G_{\alpha,c}(\ell)
=\frac{N}{\gamma_{\alpha,c}}\frac{V_\ell^{(\alpha,c)}V_{N-\ell}^{(\alpha,c)}}{V_N^{(\alpha,c)}}.
\]
\end{proposition}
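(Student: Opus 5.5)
This is a direct algebraic identity. I would rewrite both sides in terms of $\eta=\eta_{\alpha,c}$, drop the indices $\alpha,c$, and compare. By definition,
\[
\mathcal G(0)-\mathcal G(\ell)=\frac{N\left(1+\rho^N-\rho^\ell-\rho^{N-\ell}\right)}{(\rho-\rho^{-1})(\rho^N-1)}.
\]
The numerator factors as $(1-\rho^\ell)(1-\rho^{N-\ell})$. This also covers $\ell=0$, where both sides vanish because $V_0=0$.

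Next substitute $\rho=\eta^2$ and pull out symmetric powers:
\[
1-\eta^{2\ell}=-\eta^{\ell}(\eta^\ell-\eta^{-\ell}).
\]
By Lemma \ref{lem14}, $\eta^\ell-\eta^{-\ell}=(\eta-\eta^{-1})V_\ell$. Hence the numerator equals
\[
\eta^{N}(\eta-\eta^{-1})^2V_\ell V_{N-\ell}.
\]
In the denominator, $\rho^N-1=\eta^{N}(\eta^N-\eta^{-N})=\eta^N(\eta-\eta^{-1})V_N$. Also
\[
\rho-\rho^{-1}=\eta^2-\eta^{-2}=(\eta-\eta^{-1})(\eta+\eta^{-1})=(\eta-\eta^{-1})\gamma.
\]
The denominator is therefore $\eta^N(\eta-\eta^{-1})^2\gamma V_N$.

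Dividing, the factors $\eta^N(\eta-\eta^{-1})^2$ cancel, leaving $N V_\ell V_{N-\ell}/(\gamma V_N)$, as claimed. The nonvanishing facts needed for this are all recorded just before the statement: $\eta\neq\pm1$, $\gamma\neq 0$, and $V_N\neq0$.

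The only step needing care is the sign and power bookkeeping in the factorizations. Specifically, I would check that $(1-\eta^{2\ell})(1-\eta^{2(N-\ell)})$ produces exactly $\eta^{N}$ with a plus sign, since the two minus signs cancel. I would also check that the resulting formula is independent of the choice of the square root $\eta$ and of replacing $\rho$ by $\rho^{-1}$. Replacing $\eta$ by $-\eta$ or by $\eta^{-1}$ changes $\gamma$ and each $V_n$ by the same sign factors: $\gamma\mapsto-\gamma$ and $V_n\mapsto(-1)^{n-1}V_n$. Under $\eta\mapsto-\eta$ the ratio picks up $(-1)^{(\ell-1)+(N-\ell-1)-(N-1)}=-1$, which cancels against the sign change in $\gamma$. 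So the right-hand side is well defined and no case split is needed.
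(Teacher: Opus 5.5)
Your proof is correct and follows essentially the same route as the paper: factor the numerator as $(1-\rho^\ell)(1-\rho^{N-\ell})$, rewrite $1-\rho^q$, $\rho-\rho^{-1}$ and $\rho^N-1$ in terms of $\eta$ using Lemma~\ref{lem14}, and cancel the common factor $\eta^N(\eta-\eta^{-1})^2$. One small slip in your optional closing remark, which the identity does not need anyway: replacing $\eta$ by $\eta^{-1}$ leaves $\gamma$ and every $V_n$ unchanged, and only $\eta\mapsto-\eta$ produces the sign changes $\gamma\mapsto-\gamma$ and $V_n\mapsto(-1)^{n-1}V_n$.
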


\begin{proof}
By definition,
\[
\begin{aligned}
\mathcal G_{\alpha,c}(0)-\mathcal G_{\alpha,c}(\ell)
&=\frac{N\left(1-\rho_{\alpha,c}^{\ell}\right)\left(1-\rho_{\alpha,c}^{N-\ell}\right)}{\left(\rho_{\alpha,c}-\rho_{\alpha,c}^{-1}\right)\left(
\rho_{\alpha,c}^{N}-1\right)}.
\end{aligned}
\]
Since $\rho_{\alpha,c}=\eta_{\alpha,c}^{\,2}$, we have
\[
1-\rho_{\alpha,c}^{\,q}
=-\eta_{\alpha,c}^{\,q}\left(\eta_{\alpha,c}^{\,q}-\eta_{\alpha,c}^{-q}\right).
\]
Moreover,
\[
\rho_{\alpha,c}-\rho_{\alpha,c}^{-1}
=\gamma_{\alpha,c}\left(\eta_{\alpha,c}-\eta_{\alpha,c}^{-1}\right),
\]
and
\[
\rho_{\alpha,c}^{N}-1
=\eta_{\alpha,c}^{\,N}\left(\eta_{\alpha,c}^{\,N}-\eta_{\alpha,c}^{-N}\right).
\]
Using these identities together with Lemma \ref{lem14}, we obtain the assertion. 
\end{proof}

\subsection{The main recurrence formula}
The following theorem is the main formula of this paper. 
It extends the representation of the form
\[
\frac{V_\ell V_{N-\ell}}{V_N}
\]
appearing in our previous work \cite{MiezakiTamura2026} to a general connected regular graph $G$. 

\begin{theorem}
\label{thm16}
For each $\alpha=1,\ldots,t$, assume that all roots of $D_{k,\nu_\alpha}(x)$ are simple.
Then, for $a\in V(G)$ and $0\le\ell\le N-1$, we have 
\[
\begin{aligned}
h_X((0,a),(\ell,a))
&=\frac{2k+r}{2k}h_{C_N^k}(0,\ell) \\
&\quad-\frac{Nm(2k+r)}{2}\sum_{\alpha=1}^{t}E_\alpha(a,a)\sum_{c=1}^{k}\frac1{\gamma_{\alpha,c}D_{k,\nu_\alpha}'(\phi_{\alpha,c})}
\frac{V_\ell^{(\alpha,c)}V_{N-\ell}^{(\alpha,c)}}{V_N^{(\alpha,c)}}.
\end{aligned}
\]
\end{theorem}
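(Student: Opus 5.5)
The plan is to specialize the general correction formula to $b=a$ and then substitute the product identity term by term. By Corollary \ref{cor07}, together with the definition of $\mathcal R_{N,k,G}(\ell;a,b)$, we have
\[
h_X((0,a),(\ell,a))=\frac{2k+r}{2k}h_{C_N^k}(0,\ell)+\mathcal R_{N,k,G}(\ell;a,a).
\]
So it suffices to identify $\mathcal R_{N,k,G}(\ell;a,a)$ with the second line of the theorem.

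The first step is to apply Proposition \ref{prop12} with $b=a$. Both numerators then carry the common factor $E_\alpha(a,a)$, and we obtain
\[
\mathcal R_{N,k,G}(\ell;a,a)=-\frac{m(2k+r)}{2}\sum_{\alpha=1}^{t}E_\alpha(a,a)\sum_{c=1}^{k}\frac{\mathcal G_{\alpha,c}(0)-\mathcal G_{\alpha,c}(\ell)}{D_{k,\nu_\alpha}'(\phi_{\alpha,c})}.
\]
The simplicity hypothesis enters here. It is exactly what Proposition \ref{prop09}, and hence Propositions \ref{prop11} and \ref{prop12}, require, and it guarantees $D_{k,\nu_\alpha}'(\phi_{\alpha,c})\neq 0$.

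The second step is to substitute Proposition \ref{prop15}. It replaces each difference $\mathcal G_{\alpha,c}(0)-\mathcal G_{\alpha,c}(\ell)$ by
\[
\frac{N}{\gamma_{\alpha,c}}\frac{V_\ell^{(\alpha,c)}V_{N-\ell}^{(\alpha,c)}}{V_N^{(\alpha,c)}}.
\]
We then pull the factor $N$ out of the double sum to get the prefactor $-Nm(2k+r)/2$. The denominators $\gamma_{\alpha,c}$ and $V_N^{(\alpha,c)}$ were already shown to be nonzero.

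The only point needing care is well-definedness with respect to the choices made. The roots $\rho_{\alpha,c}$ versus $\rho_{\alpha,c}^{-1}$ and the square roots $\eta_{\alpha,c}$ versus $-\eta_{\alpha,c}$ are arbitrary, but this is automatic. The left-hand side of Proposition \ref{prop15} depends only on $\phi_{\alpha,c}$, since $\mathcal G_{\alpha,c}$ is invariant under $\rho_{\alpha,c}\mapsto\rho_{\alpha,c}^{-1}$. Consequently each summand in the theorem is independent of these choices.

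I expect no real obstacle: the argument is a direct combination of the earlier propositions.
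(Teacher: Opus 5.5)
Your proposal is correct and matches the paper's proof: set $b=a$ in Proposition \ref{prop12}, substitute Proposition \ref{prop15}, and combine the result with Corollary \ref{cor07}. Your argument for independence from the choices of $\rho_{\alpha,c}$ and $\eta_{\alpha,c}$ is a slightly more direct route than the paper's separate remark. You note that $\mathcal G_{\alpha,c}$ is invariant under $\rho\mapsto\rho^{-1}$ and that Proposition \ref{prop15} holds for either square root, so each summand depends only on $\phi_{\alpha,c}$.
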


\begin{proof}
Setting $b=a$ in Proposition \ref{prop12}, we obtain
\[
\begin{aligned}
\mathcal R_{N,k,G}(\ell;a,a)
&=-\frac{m(2k+r)}{2}\sum_{\alpha=1}^{t}E_\alpha(a,a)\sum_{c=1}^{k}\frac{\mathcal G_{\alpha,c}(0)-\mathcal G_{\alpha,c}(\ell)}{D_{k,\nu_\alpha}'(\phi_{\alpha,c})}.
\end{aligned}
\]
Substituting Proposition \ref{prop15} into this expression and using Corollary \ref{cor07}, 
we obtain the assertion. 
\end{proof}

\begin{remark}
The right-hand side of Theorem \ref{thm16} is independent of the choices of $\rho_{\alpha,c}$ and $\eta_{\alpha,c}$. 

First, consider replacing $\eta_{\alpha,c}$ by $-\eta_{\alpha,c}$. 
Then
\[
\gamma_{\alpha,c}
=\eta_{\alpha,c}+\eta_{\alpha,c}^{-1}
\]
is replaced by $-\gamma_{\alpha,c}$. 

Moreover, by Lemma \ref{lem14}, 
\[
V_n^{(\alpha,c)} \longmapsto (-1)^{n-1}V_n^{(\alpha,c)}. 
\]
Therefore,
\[
\frac{V_\ell^{(\alpha,c)}V_{N-\ell}^{(\alpha,c)}}{V_N^{(\alpha,c)}}
\]
is multiplied by $-1$. 
On the other hand, $1/\gamma_{\alpha,c}$ is also multiplied by $-1$. 
Hence,
\[
\frac1{\gamma_{\alpha,c}}\frac{V_\ell^{(\alpha,c)}V_{N-\ell}^{(\alpha,c)}}{V_N^{(\alpha,c)}}
\]
remains unchanged. 

Next, consider replacing $\rho_{\alpha,c}$ by $\rho_{\alpha,c}^{-1}$. 
In this case, we may choose $\eta_{\alpha,c}^{-1}$ instead of $\eta_{\alpha,c}$.
Then
\[
\gamma_{\alpha,c}
=\eta_{\alpha,c}+\eta_{\alpha,c}^{-1}
\]
remains unchanged. 
Moreover, both the numerator and the denominator in Lemma \ref{lem14} change sign, and hence $V_n^{(\alpha,c)}$ also remains unchanged. 

Therefore, the right-hand side of Theorem \ref{thm16} is independent of the choices of $\rho_{\alpha,c}$ and $\eta_{\alpha,c}$. 
\end{remark}

\begin{remark}
\label{remRealValue}
In general, the terms in Theorem \ref{thm16} involve the complex numbers
\[
\phi_{\alpha,c}, \quad
\rho_{\alpha,c}, \quad
\eta_{\alpha,c}, \quad
\gamma_{\alpha,c}.
\]
However, the entire right-hand side is real. 

Since $D_{k,\nu_\alpha}(x)$ is a polynomial with real coefficients, 
its nonreal roots occur in complex conjugate pairs. 
That is, if $\phi_{\alpha,c}$ is a nonreal root, 
then its complex conjugate $\overline{\phi_{\alpha,c}}$ is also a root. 

If the corresponding $\rho$, $\eta$, and $\gamma$ are chosen compatibly with complex conjugation, 
then the terms corresponding to two complex conjugate roots are also complex conjugates of each other. 
Therefore, the sum of these two terms is real. 

On the other hand, for a real root, 
the left-hand side of Proposition \ref{prop15}, 
\[
\mathcal G_{\alpha,c}(0)-\mathcal G_{\alpha,c}(\ell),
\]
and $D_{k,\nu_\alpha}'(\phi_{\alpha,c})$ are real, 
and hence the corresponding term is also real. 

Therefore, after summing over all roots, 
the right-hand side of Theorem \ref{thm16} is real. 
\end{remark}

\subsection{The walk-regular case}
Theorem \ref{thm16} involves the diagonal entries $E_\alpha(a,a)$ of the spectral projections. 
In general, these entries depend on the vertex $a$. 
However, when $G$ is a walk-regular graph, 
they are determined only by the eigenvalue multiplicities \cite{GodsilMcKay1980}. 

\begin{definition}[\cite{GodsilMcKay1980}]
\label{defWalkRegular}
A graph $G$ is said to be walk-regular if, for every nonnegative integer $q$, 
\[
(A_G^q)(a,a)
\]
is independent of the vertex $a$. 
\end{definition}

\begin{lemma}[\cite{GodsilMcKay1980}, Theorem 4.1]
\label{lemWalkRegular}
The graph $G$ is walk-regular if and only if, 
for each $\alpha=0,1,\ldots,t$, 
\[
E_\alpha(a,a)
=\frac{m_\alpha}{m} \qquad (a \in V(G)). 
\]
\end{lemma}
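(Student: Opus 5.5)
We prove the two implications separately, using the spectral decomposition $A_G=\sum_{\alpha=0}^{t}(r-\nu_\alpha)E_\alpha$ together with the idempotence and mutual orthogonality of the projections $E_\alpha$. These give
\[
(A_G^q)(a,a)=\sum_{\alpha=0}^{t}(r-\nu_\alpha)^q E_\alpha(a,a)\qquad(q\ge0).
\]

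For the implication from the projections to walk-regularity, assume $E_\alpha(a,a)=m_\alpha/m$ for every $\alpha$ and every $a$. Substituting into the identity above gives $(A_G^q)(a,a)=\frac1m\sum_\alpha m_\alpha(r-\nu_\alpha)^q=\frac1m\operatorname{tr}(A_G^q)$. This is independent of $a$, so $G$ is walk-regular.

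For the converse, assume $(A_G^q)(a,a)=w_q$ is independent of $a$ for all $q\ge0$. Each $E_\alpha$ is a polynomial in $A_G$. Since the values $\lambda_\alpha=r-\nu_\alpha$ are distinct, Lagrange interpolation gives
\[
E_\alpha=\prod_{\beta\ne\alpha}\frac{A_G-\lambda_\beta I_m}{\lambda_\alpha-\lambda_\beta}=\sum_{q}c_{\alpha,q}A_G^q .
\]
The identity holds because the right-hand side acts as $1$ on the $\lambda_\alpha$-eigenspace and as $0$ on the other eigenspaces, and these eigenspaces span $\mathbb R^m$. Taking the $(a,a)$ entry gives $E_\alpha(a,a)=\sum_q c_{\alpha,q}w_q$, which is independent of $a$; call it $e_\alpha$. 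Summing over $a$ gives $m e_\alpha=\operatorname{tr}E_\alpha=\operatorname{rank}E_\alpha=m_\alpha$, since the trace of an orthogonal projection equals its rank. Hence $E_\alpha(a,a)=m_\alpha/m$.

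The only point requiring any care is the interpolation step expressing $E_\alpha$ as a polynomial in $A_G$. It is standard, and it relies only on the eigenvalues $r-\nu_\alpha$ being distinct, which holds because the $\nu_\alpha$ are distinct by definition. We also note that the case $\alpha=0$ is consistent with $E_0=\frac1mJ_m$ and $m_0=1$.
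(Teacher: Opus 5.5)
Your proof is correct and is essentially the paper's argument. The paper inverts the Vandermonde system $(A_G^q)(a,a)=\sum_{\alpha}(r-\nu_\alpha)^qE_\alpha(a,a)$ for $q=0,\ldots,t$, whereas you write $E_\alpha$ directly as the Lagrange interpolation polynomial in $A_G$, whose coefficients $c_{\alpha,q}$ are exactly the entries of the inverse Vandermonde matrix; the forward direction and the trace step $\operatorname{tr}E_\alpha=m_\alpha$ coincide with the paper's.
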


\begin{proof}
For the reader's convenience, we give the proof in the present Laplacian notation. 
Since
\[
A_G
=\sum_{\alpha=0}^{t}(r-\nu_\alpha)E_\alpha,
\]
we have
\[
(A_G^q)(a,a)
=\sum_{\alpha=0}^{t}(r-\nu_\alpha)^qE_\alpha(a,a).
\]
Therefore, if each $E_\alpha(a,a)$ is independent of the vertex,
then $G$ is walk-regular. 

Conversely, suppose that $G$ is walk-regular. 
Using the above identity for $q=0, 1, \ldots, t$ and the fact that $r-\nu_0, \ldots, r-\nu_t$ are mutually distinct, 
the nonsingularity of the Vandermonde matrix implies that each $E_\alpha(a,a)$ is independent of the vertex $a$. 
Furthermore, 
\[
\sum_{a\in V(G)}E_\alpha(a,a)
=\operatorname{tr}(E_\alpha)
=m_\alpha.
\]
Since $E_\alpha(a,a)$ is independent of $a$ and $G$ has $m$ vertices, we obtain
\[
E_\alpha(a,a)=\frac{m_\alpha}{m}.
\]
This proves the assertion. 
\end{proof}

By Lemma \ref{lemWalkRegular}, 
\[
E_\alpha(a,a)=\frac{m_\alpha}{m}.
\]
Substituting this identity into Theorem \ref{thm16}, 
we obtain the following result. 

\begin{corollary}
\label{corWalkRegular}
For each $\alpha=1,\ldots,t$, assume that all roots of $D_{k,\nu_\alpha}(x)$ are simple. 
Furthermore, suppose that $G$ is a walk-regular graph. 
Then, for any $a\in V(G)$ and $0 \le \ell \le N-1$, 
we have
\[
\begin{aligned}
h_X((0,a),(\ell,a))
&=\frac{2k+r}{2k}h_{C_N^k}(0,\ell) \\
&\quad-\frac{N(2k+r)}{2}\sum_{\alpha=1}^{t}m_\alpha\sum_{c=1}^{k}\frac1{\gamma_{\alpha,c}D_{k,\nu_\alpha}'(\phi_{\alpha,c})}\frac{V_\ell^{(\alpha,c)}V_{N-\ell}^{(\alpha,c)}}{V_N^{(\alpha,c)}}.
\end{aligned}
\]
In particular, the right-hand side is independent of the vertex $a$. 
\end{corollary}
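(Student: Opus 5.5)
The corollary follows by substituting the walk-regular values of the diagonal projection entries into Theorem \ref{thm16}. Both hypotheses of Theorem \ref{thm16} are assumed here: every $D_{k,\nu_\alpha}(x)$ has only simple roots, and $a\in V(G)$, $0\le \ell\le N-1$ are arbitrary. So the identity of Theorem \ref{thm16} holds, with the weights $E_\alpha(a,a)$ in the correction sum.

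Since $G$ is walk-regular, Lemma \ref{lemWalkRegular} gives $E_\alpha(a,a)=m_\alpha/m$ for every $\alpha=1,\ldots,t$ and every vertex $a$. Substituting this into the correction term gives
\[
\frac{Nm(2k+r)}{2}\cdot\frac{m_\alpha}{m}=\frac{N(2k+r)}{2}\,m_\alpha ,
\]
which is exactly the coefficient in the displayed formula. The inner sums over $c$ are unchanged, since they involve only $\phi_{\alpha,c}$, $\gamma_{\alpha,c}$ and the sequences $V^{(\alpha,c)}_n$.

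For the vertex-independence claim, note what each ingredient on the right-hand side depends on:
\begin{itemize}
\item $h_{C_N^k}(0,\ell)$ involves only $N$, $k$ and $\ell$;
\item $\phi_{\alpha,c}$ are the roots of $D_{k,\nu_\alpha}$, so they depend only on $k$ and $\nu_\alpha$;
\item $\gamma_{\alpha,c}$ and $V_n^{(\alpha,c)}$ are built from the $\phi_{\alpha,c}$ through the choices of $\rho_{\alpha,c}$ and $\eta_{\alpha,c}$, and the remark after Theorem \ref{thm16} shows the combined expression does not depend on those choices;
\item the remaining data are $N$, $k$, $r$, $m_\alpha$ and $\nu_\alpha$.
\end{itemize}
None of these involves $a$, so the right-hand side is independent of the vertex. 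There is no real obstacle; the only point needing care is invoking that choice-independence remark, so that the formula is well defined as a function of the spectral data alone.
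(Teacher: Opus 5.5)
Your proposal is correct and is the paper's argument: substitute $E_\alpha(a,a)=m_\alpha/m$ from Lemma \ref{lemWalkRegular} into Theorem \ref{thm16}, so the coefficient $\frac{Nm(2k+r)}{2}\cdot\frac{m_\alpha}{m}$ becomes $\frac{N(2k+r)}{2}m_\alpha$ and no dependence on $a$ remains. The appeal to the choice-independence remark is harmless but not needed for independence of $a$, since the choices of $\rho_{\alpha,c}$ and $\eta_{\alpha,c}$ never involve $a$.
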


Corollary \ref{corWalkRegular} shows that the average hitting time between two vertices having the same $G$-coordinate
can be described only in terms of the Laplacian eigenvalues of $G$ and their multiplicities. 
Therefore, it is not necessary to determine the individual eigenvectors. 

\begin{remark}
\label{rem18}
For any two vertices, the Green-type representation in Proposition \ref{prop12} is obtained. 
In particular, when the two vertices have the same $G$-coordinate, 
the Green-type terms appear in the difference
\[
\mathcal G_{\alpha,c}(0)-\mathcal G_{\alpha,c}(\ell).
\]
Therefore, they can be transformed into the second-order linear recurrence representation in Theorem \ref{thm16}. 
\end{remark}

\section{Combinatorial applications}
\label{sec05}

In this section, using the results obtained in the preceding sections, 
we determine the number of spanning trees and the number of two-component spanning forests of the Cartesian product graph
\[
X=C_N^k\square G. 
\]
For a connected graph $Y$, we denote the number of spanning trees of $Y$ by $\tau(Y)$. 
We shall use the following standard facts. 
They are stated here in the form needed below.
\begin{theorem}[\cite{Kirchhoff1847}, Matrix--Tree Theorem]
\label{thmMatrixTree}
Let $Y$ be a connected graph with $n$ vertices, and let
\[
0=\lambda_1<\lambda_2\le\cdots\le\lambda_n
\]
be the Laplacian eigenvalues of $Y$.
Then
\[
\tau(Y)
=
\frac1n\prod_{q=2}^{n}\lambda_q.
\]
\end{theorem}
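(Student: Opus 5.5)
The plan is to deduce the statement from the classical Matrix--Tree Theorem in its cofactor form, combined with the spectral theorem for the Laplacian $L_Y=D_Y-A_Y$. Write $\operatorname{adj}(L_Y)$ for the adjugate of $L_Y$. The key input is that for a connected graph $Y$ every cofactor of $L_Y$ equals $\tau(Y)$; this is Kirchhoff's result, proved via the Cauchy--Binet formula applied to $L_Y=BB^{t}$ with $B$ an oriented incidence matrix, and I will cite it rather than reprove it. Equivalently,
\[
\operatorname{adj}(L_Y)=\tau(Y)J_n .
\]
The task is then to extract the eigenvalue product from this identity.

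\textbf{Step 1 (rank and kernel).} Since $Y$ is connected, $L_Y$ has rank $n-1$, and its kernel is spanned by the all-ones vector $\mathbf 1$. This is the statement $\lambda_1=0<\lambda_2$, which follows from $x^{t}L_Yx=\sum_{\{u,v\}\in E(Y)}(x_u-x_v)^2$.

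\textbf{Step 2 (trace of the adjugate).} The characteristic polynomial is
\[
\det(xI-L_Y)=x\prod_{q=2}^{n}(x-\lambda_q).
\]
Its coefficient of $x$ is $(-1)^{n-1}\prod_{q\ge2}\lambda_q$. On the other hand, expanding the determinant shows that this coefficient is $(-1)^{n-1}$ times the sum of the principal $(n-1)\times(n-1)$ minors of $L_Y$, that is, $(-1)^{n-1}\operatorname{tr}\operatorname{adj}(L_Y)$. Comparing the two expressions gives
\[
\operatorname{tr}\operatorname{adj}(L_Y)=\prod_{q=2}^{n}\lambda_q .
\]

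\textbf{Step 3 (conclude).} Using $\operatorname{adj}(L_Y)=\tau(Y)J_n$ from the cited Matrix--Tree Theorem, we get $\operatorname{tr}\operatorname{adj}(L_Y)=n\,\tau(Y)$. Dividing by $n$ yields $\tau(Y)=\frac1n\prod_{q=2}^{n}\lambda_q$, as claimed.

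For completeness, the structure of the adjugate can also be seen spectrally, independently of the combinatorial count. Since $L_Y\operatorname{adj}(L_Y)=\det(L_Y)I=0$, every column of $\operatorname{adj}(L_Y)$ lies in $\ker L_Y=\operatorname{span}(\mathbf 1)$. Because $L_Y$ is symmetric, $\operatorname{adj}(L_Y)$ is symmetric, so every row is also constant. Hence $\operatorname{adj}(L_Y)=cJ_n$ for some scalar $c$, and by Step 2, $c=\frac1n\prod_{q\ge2}\lambda_q$. What remains, and what the combinatorial Matrix--Tree Theorem supplies, is the identification $c=\tau(Y)$.

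\textbf{Main obstacle.} The only genuinely combinatorial step is showing that a single principal cofactor counts spanning trees, via Cauchy--Binet: each nonsingular $(n-1)\times(n-1)$ minor of the reduced incidence matrix equals $\pm1$ exactly when the chosen edges form a spanning tree, and $0$ otherwise. Since the paper cites this step (Kirchhoff), I would take it as given. The rest is the linear algebra in Steps 1 to 3, which is routine.
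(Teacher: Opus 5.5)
Your argument is correct and consistent with the paper, which gives no proof of its own and simply quotes the result from Kirchhoff as a standard fact. Your reduction of the spectral form to the cofactor form is the standard derivation implicit in that citation: you read off the coefficient of $x$ in $\det(xI-L_Y)=x\prod_{q\ge2}(x-\lambda_q)$ as $(-1)^{n-1}\operatorname{tr}\operatorname{adj}(L_Y)$, use that each principal cofactor equals $\tau(Y)$, and so obtain $\operatorname{tr}\operatorname{adj}(L_Y)=n\,\tau(Y)$. Only the diagonal cofactors are needed, so your side argument that $\operatorname{adj}(L_Y)=cJ_n$ is a valid consistency check but not required.
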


\begin{theorem}[\cite{ChebotarevShamis1997}, Matrix-forest theorem]
\label{thmForestResistance}
Let $Y$ be a connected graph.
For two distinct vertices $x,y\in V(Y)$, let $F_Y(x\mid y)$ be the number
of two-component spanning forests in which $x$ and $y$ belong to different
components.
Then
\[
F_Y(x\mid y)
=
\tau(Y)R_Y(x,y).
\]
\end{theorem}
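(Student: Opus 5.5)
We need to prove the statement $F_Y(x\mid y)=\tau(Y)R_Y(x,y)$ for two distinct vertices $x,y$ of a connected graph $Y$. The approach combines the all-minors matrix-tree theorem with the standard expression of effective resistance through the reduced Laplacian.

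Let $L_Y$ be the Laplacian of $Y$, and let $L^{(y)}$ denote $L_Y$ with the row and column of $y$ deleted. By the Matrix--Tree Theorem in its cofactor form (equivalent to Theorem \ref{thmMatrixTree}), $\det L^{(y)}=\tau(Y)$. Since $Y$ is connected, this determinant is nonzero, so $L^{(y)}$ is invertible.

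We then use the electrical characterization of effective resistance. Inject a unit current at $x$ and ground $y$. The potential vector $p$ (with $p(y)=0$) solves $L^{(y)}p=e_x$ on $V(Y)\setminus\{y\}$, and $R_Y(x,y)=p(x)=(L^{(y)})^{-1}(x,x)$. By Cramer's rule,
\[
R_Y(x,y)=\frac{\det L^{(x,y)}}{\det L^{(y)}},
\]
where $L^{(x,y)}$ is $L_Y$ with the rows and columns of both $x$ and $y$ removed.

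The remaining step is the combinatorial identification $\det L^{(x,y)}=F_Y(x\mid y)$. This is the main point of the argument, and we would prove it as follows. Contract $x$ and $y$ into a single vertex $z$, keeping any resulting multi-edges; call the new multigraph $Y'$. Deleting the row and column of $z$ from $L_{Y'}$ gives exactly $L^{(x,y)}$, since the Laplacian entries on vertices other than $x,y$ are unchanged by the contraction. Hence, by the Matrix--Tree Theorem for multigraphs, $\det L^{(x,y)}=\tau(Y')$.

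Next we build a bijection between spanning trees of $Y'$ and two-component spanning forests of $Y$ separating $x$ from $y$. A spanning tree of $Y'$ has $|V|-2$ edges. Lifting it to $Y$ gives an edge set with no cycle, because a cycle in $Y$ through neither or one of $x,y$ would give a cycle in $Y'$. Moreover, the lifted set contains no $x$--$y$ path, since such a path would become a cycle through $z$ in $Y'$. So the lift is a forest on $|V|$ vertices with $|V|-2$ edges, that is, a forest with exactly two components, and $x$ and $y$ lie in different components. Conversely, contracting such a forest yields a spanning tree of $Y'$. The lift and the contraction are inverse to each other, so $\tau(Y')=F_Y(x\mid y)$.

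Combining the three displays gives $F_Y(x\mid y)=\det L^{(x,y)}=\tau(Y)R_Y(x,y)$.

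The only delicate points are the multigraph version of the matrix-tree theorem and the careful check of the bijection. Alternatively, the identity follows directly from the all-minors matrix-tree theorem of Chebotarev--Shamis.
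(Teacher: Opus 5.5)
Your proof is correct, and it takes a different route from the paper, which gives no proof of this statement. The paper simply imports it with a citation to Chebotarev--Shamis, whose matrix-forest theorem concerns $(I+L_Y)^{-1}$ and spanning rooted forests; the two-forest identity is one consequence of that theorem. You instead derive it directly from three ingredients:
\begin{itemize}
\item the grounded-Laplacian formula $R_Y(x,y)=(L^{(y)})^{-1}(x,x)$;
\item the adjugate (Cramer) formula for a diagonal entry of an inverse;
\item Kirchhoff's cofactor theorem, applied both to $Y$ and to the multigraph $Y'$ obtained by identifying $x$ and $y$.
\end{itemize}
This is the classical Kirchhoff-style derivation. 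It makes Section~\ref{sec05} self-contained modulo the matrix-tree theorem. The price is that you need the cofactor form of that theorem for loopless multigraphs, not the eigenvalue form for simple graphs stated in Theorem~\ref{thmMatrixTree}. Passing between the two forms uses that all cofactors of a Laplacian coincide, so that $n\det L^{(v)}=\prod_{q\ge2}\lambda_q$.

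A few details deserve one line each in a write-up.
\begin{itemize}
\item If $x\sim y$, the edge $xy$ becomes a loop at $z$ and should be discarded. This is harmless: it affects neither $L^{(x,y)}$ nor the spanning trees of $Y'$, and it cannot occur in a forest separating $x$ from $y$.
\item Keeping parallel edges is essential, since an $x$--$y$ path of length two becomes a $2$-cycle in $Y'$.
\item Cycles through both $x$ and $y$ are excluded by your no-$x$--$y$-path argument, not by the first sentence about cycles.
\item In the converse direction, the contracted edge set is connected, because every vertex lies in the component of $x$ or of $y$ and both are attached to $z$. It has $|V(Y')|-1$ edges, hence is a spanning tree.
\end{itemize}
Finally, the all-minors matrix-tree theorem you mention at the end is usually credited to Chaiken rather than to Chebotarev--Shamis.
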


\begin{theorem}[\cite{NashWilliams1959}, Commute time identity]
\label{thmCommuteTime}
Let $Y$ be a connected graph.
Then, for two vertices $x,y\in V(Y)$,
\[
h_Y(x,y)+h_Y(y,x)
=
2|E(Y)|R_Y(x,y).
\]
\end{theorem}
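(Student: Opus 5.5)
The statement to prove is the commute time identity $h_Y(x,y)+h_Y(y,x)=2|E(Y)|R_Y(x,y)$ for a connected graph $Y$. The plan is to express both sides through one object, the Laplacian $L=D-A$ of $Y$, and its Moore--Penrose pseudoinverse $L^{+}$. Here $D$ is the diagonal degree matrix, and we write $\deg$ for vertex degrees.

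First I identify the resistance side. I define $R_Y(x,y)$ as the energy-minimizing potential difference: let $\varphi$ solve $L\varphi=e_x-e_y$. A solution exists because $e_x-e_y\perp\mathbf 1=\ker L$, using connectivity. Then $R_Y(x,y)=\varphi(x)-\varphi(y)$. Taking $\varphi=L^{+}(e_x-e_y)$ gives
\[
R_Y(x,y)=(e_x-e_y)^{t}L^{+}(e_x-e_y).
\]

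Second I treat the hitting-time side. Fix the target $y$ and let $g(u)=h_Y(u,y)$. First-step analysis gives $g(y)=0$ and, for $u\ne y$,
\[
g(u)=1+\frac{1}{\deg u}\sum_{w\sim u}g(w),
\qquad\text{equivalently}\qquad (Lg)(u)=\deg u .
\]
At $y$ the value $(Lg)(y)$ is fixed by the requirement $\sum_u (Lg)(u)=0$, so
\[
Lg=D\mathbf 1-2|E(Y)|\,e_y .
\]
Symmetrically, $f(u)=h_Y(u,x)$ satisfies $Lf=D\mathbf 1-2|E(Y)|\,e_x$. Subtracting, I get
\[
L(f-g)=2|E(Y)|(e_y-e_x).
\]
Since $\ker L$ consists of constants, $f-g$ equals $2|E(Y)|\varphi'$ up to an additive constant, where $L\varphi'=e_y-e_x$, so $\varphi'=-\varphi$ up to constants. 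Evaluating the difference between vertices $y$ and $x$ removes the constant:
\[
(f-g)(y)-(f-g)(x)=h_Y(y,x)-0-0+h_Y(x,y).
\]
The right-hand side of this equality is $2|E(Y)|\bigl(\varphi(x)-\varphi(y)\bigr)=2|E(Y)|R_Y(x,y)$, which is the identity.

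The main care point is justifying that $h_Y(\cdot,y)$ is the unique solution of the Dirichlet-type system, so that the linear-algebra identity really captures the hitting times. Finiteness of the hitting times follows from connectivity and finiteness of $Y$. Uniqueness follows from the maximum principle: a function harmonic off $y$ and vanishing at $y$ is zero. Everything else is routine bookkeeping. If desired, one can also verify the identity directly in the regular case from Lemma~\ref{lem03}, since symmetrizing that formula gives $n\sum_q|\psi_q(x)-\psi_q(y)|^2/(1-\lambda_q)$. With $L=r(I-P_Y)$ and $nr=2|E(Y)|$, this matches $2|E(Y)|(e_x-e_y)^{t}L^{+}(e_x-e_y)$.
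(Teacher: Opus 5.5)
Your proof is correct. It does more than the paper, which gives no proof of Theorem~\ref{thmCommuteTime}: the paper quotes it as a standard fact with a citation and uses it only once, in Proposition~\ref{prop21}, for the $(2k+r)$-regular graph $X$.

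You instead derive it for an arbitrary connected graph. First-step analysis gives $L\,h_Y(\cdot,y)=D\mathbf 1-2|E(Y)|e_y$, and $\ker L=\mathrm{span}(\mathbf 1)$. So $h_Y(\cdot,x)-h_Y(\cdot,y)=-2|E(Y)|\varphi$ up to an additive constant, and that constant disappears when you compare the values at $y$ and $x$. Your bookkeeping at $x$ and $y$ is right.

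One correction to your own assessment: uniqueness of the Dirichlet problem is not needed. You never solve the system for the hitting times; you only use that the true, finite hitting-time vector satisfies it.

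Your closing remark is the more relevant one for this paper. For regular $Y$, symmetrizing Lemma~\ref{lem03} gives $h_Y(x,y)+h_Y(y,x)=n\sum_{q\ge1}|\psi_q(x)-\psi_q(y)|^2/(1-\lambda_q)$. With $L=r(I-P_Y)$ and $nr=2|E(Y)|$, this equals $2|E(Y)|(e_x-e_y)^{t}L^{+}(e_x-e_y)$. Hence every use of the identity in Section~\ref{sec05} could have been made self-contained with the spectral tools the paper already has.

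The two routes buy different things. Your potential-theoretic argument gives generality, covering nonregular graphs, which the paper lists as a future direction. The spectral route stays inside the paper's framework. Both rest on identifying $R_Y(x,y)$ with $(e_x-e_y)^{t}L^{+}(e_x-e_y)$, a definition the paper uses implicitly but never states.
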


In what follows, put
\[
\theta_j=\frac{2\pi j}{N}, \qquad
\kappa_j
=2k-2\sum_{s=1}^{k}\cos(s\theta_j)
=2k-\mu_j.
\]
Then
\[
\kappa_0=0, \qquad
\kappa_j>0 \quad (j=1,\ldots,N-1).
\]

Let the distinct Laplacian eigenvalues of $G$ be
\[
0=\nu_0<\nu_1<\cdots<\nu_t,
\]
and let $m_\alpha$ be the multiplicity of $\nu_\alpha$. 

\subsection{Spanning trees}
The Laplacian eigenvalues of $C_N^k$ are
\[
\kappa_j \qquad (j=0, 1, \ldots, N-1).
\]
By the standard spectral formula for Cartesian product graphs
\cite{BrouwerHaemers2012}, 
the Laplacian spectrum of $X$, counted with multiplicities, consists of
\[
\kappa_j+\nu_\alpha, \qquad
0\le j\le N-1, \qquad
0\le\alpha\le t.
\]
Here, each pair $(j,\alpha)$ contributes with multiplicity $m_\alpha$, 
and $\kappa_0+\nu_0=0$ is the unique zero eigenvalue.

\begin{proposition}
\label{prop19}
Let $X=C_N^k\square G$.
Then
\[
\tau(X)
=\frac1{Nm}\prod_{j=1}^{N-1}\kappa_j\prod_{\alpha=1}^{t}\prod_{j=0}^{N-1}\left(\kappa_j+\nu_\alpha\right)^{m_\alpha}.
\]

Moreover,
\[
\tau(C_N^k\square G)
=\frac{\tau(C_N^k)}{m}\prod_{\alpha=1}^{t}\prod_{j=0}^{N-1}\left(\kappa_j+\nu_\alpha\right)^{m_\alpha}, 
\]
and
\[
\tau(C_N^k\square G)
=\tau(C_N^k)\tau(G)\prod_{\alpha=1}^{t}\prod_{j=1}^{N-1}\left(\kappa_j+\nu_\alpha\right)^{m_\alpha}.
\]
\end{proposition}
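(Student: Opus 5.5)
We apply the Matrix--Tree Theorem (Theorem \ref{thmMatrixTree}) to $X$, which has $Nm$ vertices. By the spectral description recalled just before the statement, the nonzero Laplacian eigenvalues of $X$, counted with multiplicity, are $\kappa_j$ for $1\le j\le N-1$ (the pairs $(j,0)$, each of multiplicity $m_0=1$) together with $\kappa_j+\nu_\alpha$ for $0\le j\le N-1$ and $1\le\alpha\le t$, each of multiplicity $m_\alpha$. The only zero eigenvalue is $\kappa_0+\nu_0$, since $\kappa_j>0$ for $j\ge1$ and $\nu_\alpha>0$ for $\alpha\ge1$. Taking the product of these eigenvalues and dividing by $Nm$ gives the first formula directly. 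If one wants to justify the product spectrum rather than quote \cite{BrouwerHaemers2012}, note that $L_X=L_{C_N^k}\otimes I_m+I_N\otimes L_G$ and that the functions $\chi_j\otimes u_{\alpha,q}$ from the proof of Proposition \ref{prop04} form a common eigenbasis.

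For the second formula, apply Theorem \ref{thmMatrixTree} to $C_N^k$ alone. Its Laplacian eigenvalues are $\kappa_0=0$ and $\kappa_1,\dots,\kappa_{N-1}$, so
\[
\tau(C_N^k)=\frac1N\prod_{j=1}^{N-1}\kappa_j .
\]
Substituting this into the first formula leaves the factor $1/m$ and the full double product, which is the second identity.

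For the third formula, split off the $j=0$ factor in the inner product. Since $\kappa_0=0$, this factor equals $\prod_{\alpha=1}^{t}\nu_\alpha^{m_\alpha}$. By Theorem \ref{thmMatrixTree} applied to $G$, this product is $m\,\tau(G)$, because the nonzero Laplacian eigenvalues of $G$ with multiplicity are exactly the $\nu_\alpha$ for $\alpha\ge1$, each repeated $m_\alpha$ times. The factor $m$ cancels the $1/m$, and what remains is $\tau(C_N^k)\tau(G)\prod_{\alpha=1}^{t}\prod_{j=1}^{N-1}(\kappa_j+\nu_\alpha)^{m_\alpha}$.

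There is no real obstacle. The only point needing care is the multiplicity bookkeeping: the $\alpha=0$ block contributes exactly once per $j$ because $m_0=1$, and the zero eigenvalue of $X$ is simple. This is what lets the $1/(Nm)$ normalization split cleanly into $\frac1N$, absorbed into $\tau(C_N^k)$, and $\frac1m$, absorbed into $\tau(G)$. No simplicity assumption on the roots of $D_{k,\nu_\alpha}$ is used here, consistent with Remark \ref{remSimpleRoots}.
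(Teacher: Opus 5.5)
Your proposal is correct and follows the paper's proof essentially step for step. You apply the Matrix--Tree theorem to $X$ using the product Laplacian spectrum, then to $C_N^k$ to absorb $\prod_{j\ge1}\kappa_j$ into $N\tau(C_N^k)$, and then to $G$ after splitting off the $j=0$ factor $\prod_{\alpha}\nu_\alpha^{m_\alpha}=m\tau(G)$. Your extra remark that $L_X=L_{C_N^k}\otimes I_m+I_N\otimes L_G$ has the common eigenbasis $\chi_j\otimes u_{\alpha,q}$ is a valid self-contained justification of the spectrum that the paper simply cites.
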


\begin{proof}
By Theorem \ref{thmMatrixTree}, the number of spanning trees of $X$ is obtained
from the product of the nonzero Laplacian eigenvalues of $X$.

The number of vertices of $X=C_N^k\square G$ is
\[
|V(X)|=Nm.
\]
Moreover, the Laplacian eigenvalues of $X$, counted with multiplicities, are
\[
\kappa_j+\nu_\alpha, \qquad 
0\le j\le N-1, \qquad
0\le\alpha\le t.
\]

Since $\kappa_0=0$ and $\nu_0=0$,
\[
\kappa_0+\nu_0=0
\]
is the unique zero eigenvalue. 

Therefore, the nonzero Laplacian eigenvalues of $X$ can be divided into the following two types. 

First, when $\alpha=0$, we have 
\[
\kappa_j+\nu_0=\kappa_j
\]
for $j=1,\ldots,N-1$. 

Next, when $\alpha=1,\ldots,t$, the eigenvalues
\[
\kappa_j+\nu_\alpha
\]
appear for $j=0,\ldots,N-1$, each with multiplicity $m_\alpha$. 

Hence, Theorem \ref{thmMatrixTree} gives
\[
\tau(X)
=\frac1{Nm}\prod_{j=1}^{N-1}\kappa_j\prod_{\alpha=1}^{t}\prod_{j=0}^{N-1}\left(\kappa_j+\nu_\alpha\right)^{m_\alpha}.
\]
This proves the first formula. 

Next, the Laplacian eigenvalues of $C_N^k$ are
\[
0=\kappa_0, \qquad
\kappa_1,\ldots,\kappa_{N-1}.
\]
Therefore, applying Theorem \ref{thmMatrixTree} again, we obtain
\[
\tau(C_N^k)
=\frac1N\prod_{j=1}^{N-1}\kappa_j.
\]
Thus,
\[
\prod_{j=1}^{N-1}\kappa_j
=N\tau(C_N^k).
\]
Substituting this identity into the first formula, we obtain
\[
\begin{aligned}
\tau(C_N^k\square G)
&=\frac1{Nm}N\tau(C_N^k)\prod_{\alpha=1}^{t}\prod_{j=0}^{N-1}\left(\kappa_j+\nu_\alpha\right)^{m_\alpha} \\
&=\frac{\tau(C_N^k)}{m}\prod_{\alpha=1}^{t}\prod_{j=0}^{N-1}\left(\kappa_j+\nu_\alpha\right)^{m_\alpha}.
\end{aligned}
\]
This proves the second formula.

Finally, we separate the terms with $j=0$ from the product appearing in the second formula. 
Since $\kappa_0=0$, we have
\[
\begin{aligned}
&\prod_{\alpha=1}^{t}\prod_{j=0}^{N-1}\left(\kappa_j+\nu_\alpha\right)^{m_\alpha} \\
&=\prod_{\alpha=1}^{t}\nu_\alpha^{m_\alpha}\prod_{\alpha=1}^{t}\prod_{j=1}^{N-1}\left(\kappa_j+\nu_\alpha\right)^{m_\alpha}.
\end{aligned}
\]

On the other hand, the nonzero Laplacian eigenvalues of $G$ are $\nu_\alpha$ with multiplicity $m_\alpha$
for each $\alpha=1,\ldots,t$. 
Therefore, Theorem \ref{thmMatrixTree} gives
\[
\tau(G)
=\frac1m\prod_{\alpha=1}^{t}\nu_\alpha^{m_\alpha}. 
\]
Thus,
\[
\prod_{\alpha=1}^{t}\nu_\alpha^{m_\alpha}
=m\tau(G).
\]

Substituting these identities into the second formula, we obtain
\[
\begin{aligned}
\tau(C_N^k\square G)
&=\frac{\tau(C_N^k)}{m} \cdot m\tau(G)\prod_{\alpha=1}^{t}\prod_{j=1}^{N-1}\left(\kappa_j+\nu_\alpha\right)^{m_\alpha} \\
&=\tau(C_N^k)\tau(G)\prod_{\alpha=1}^{t}\prod_{j=1}^{N-1}\left(\kappa_j+\nu_\alpha\right)^{m_\alpha}.
\end{aligned}
\]
This proves the third formula. 
\end{proof}

Using the polynomial defined in the preceding section,
\[
D_{k,\nu}(x)
=k+\frac{\nu}{2}-\sum_{s=1}^{k}T_s\left(\frac{x}{2}\right),
\]
we obtain the following representation. 

\begin{corollary}
\label{cor20}
We have
\[
\tau(C_N^k\square G)
=
\frac{\tau(C_N^k)}{m}
\prod_{\alpha=1}^{t}
\left(
2^N\prod_{j=0}^{N-1}D_{k,\nu_\alpha}(2\cos\theta_j)
\right)^{m_\alpha}. 
\]
\end{corollary}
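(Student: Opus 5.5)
The plan is to start from the second formula of Proposition \ref{prop19},
\[
\tau(C_N^k\square G)
=\frac{\tau(C_N^k)}{m}\prod_{\alpha=1}^{t}\prod_{j=0}^{N-1}\left(\kappa_j+\nu_\alpha\right)^{m_\alpha},
\]
and rewrite each factor $\kappa_j+\nu_\alpha$ through the Chebyshev-type polynomial. Recall that $\kappa_j=2k-\mu_j$ and that $T_s(\cos\theta)=\cos(s\theta)$. These give, exactly as observed before Proposition \ref{prop06},
\[
2D_{k,\nu_\alpha}(2\cos\theta_j)=2k+\nu_\alpha-\mu_j=\kappa_j+\nu_\alpha .
\]
This identity holds for every $j=0,\ldots,N-1$, including $j=0$. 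There $\theta_0=0$ and $\kappa_0=0$, so both sides equal $\nu_\alpha$.

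Next I would substitute this identity into the inner product over $j$. For fixed $\alpha$,
\[
\prod_{j=0}^{N-1}(\kappa_j+\nu_\alpha)=\prod_{j=0}^{N-1}2D_{k,\nu_\alpha}(2\cos\theta_j)=2^N\prod_{j=0}^{N-1}D_{k,\nu_\alpha}(2\cos\theta_j).
\]
Raising to the power $m_\alpha$ and taking the product over $\alpha=1,\ldots,t$ then yields the stated formula directly.

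No step is a real obstacle. The only point to check is that the factor $2$ is collected correctly: there are exactly $N$ values of $j$, which gives $2^N$ inside the parenthesis before the power $m_\alpha$ is applied. The simplicity assumption on the roots is not needed here, consistent with Remark \ref{remSimpleRoots}. Positivity of each factor, from Lemma \ref{lem09}, is also not required, since this is a purely algebraic substitution.
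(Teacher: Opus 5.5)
Your proposal is correct and matches the paper's proof: substitute $2D_{k,\nu_\alpha}(2\cos\theta_j)=\kappa_j+\nu_\alpha$ into the second formula of Proposition \ref{prop19}, and collect the $N$ factors of $2$ inside each power $m_\alpha$. Your side remarks are also accurate: the identity holds at $j=0$, and neither the simplicity assumption nor positivity is needed.
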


\begin{proof}
It is sufficient to substitute
\[
2D_{k,\nu_\alpha}(2\cos\theta_j)
=\kappa_j+\nu_\alpha
\]
into Proposition \ref{prop19}. 
\end{proof}

\subsection{Two-component spanning forests}
For two distinct vertices $x, y \in V(X)$, 
let $F_X(x\mid y)$ denote the number of two-component spanning forests in which $x$ and $y$ belong to different connected components. 

\begin{proposition}
\label{prop21}
For two distinct vertices $x,y\in V(X)$, we have
\[
F_X(x\mid y)
=\frac{\tau(X)}{Nm(2k+r)}\left\{h_X(x,y)+h_X(y,x)\right\}.
\]
\end{proposition}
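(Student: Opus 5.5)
The plan is to combine Theorem \ref{thmForestResistance} with Theorem \ref{thmCommuteTime}, applied to $Y=X$. The only graph-specific input is the edge count of $X$.

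First, count edges. By construction $X=C_N^k\square G$ is $(2k+r)$-regular on $|V(X)|=Nm$ vertices. The handshake lemma then gives
\[
|E(X)|=\frac{Nm(2k+r)}{2}.
\]

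Second, since $G$ and $C_N^k$ are connected (the latter because $N\ge 2k+1$ and it contains the cycle $C_N$), $X$ is connected. So Theorem \ref{thmCommuteTime} applies to any two distinct vertices $x,y\in V(X)$ and yields
\[
R_X(x,y)=\frac{h_X(x,y)+h_X(y,x)}{2|E(X)|}=\frac{h_X(x,y)+h_X(y,x)}{Nm(2k+r)}.
\]

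Third, Theorem \ref{thmForestResistance} gives $F_X(x\mid y)=\tau(X)R_X(x,y)$. Substituting the expression for $R_X(x,y)$ gives exactly the claimed identity.

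There is no genuine obstacle here. The only points to check are the connectivity of $X$, so that both cited theorems apply, and the factor $2$ in the edge count, which cancels the $2$ in the commute-time identity.
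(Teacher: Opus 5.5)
Your proof is correct and matches the paper's argument: you compute $|E(X)|=Nm(2k+r)/2$ from $(2k+r)$-regularity, use the commute time identity to get $h_X(x,y)+h_X(y,x)=Nm(2k+r)R_X(x,y)$, and substitute this into $F_X(x\mid y)=\tau(X)R_X(x,y)$. Your explicit check that $X$ is connected, needed for both cited theorems to apply, is a small addition that the paper leaves implicit.
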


\begin{proof}
By Theorem \ref{thmForestResistance}, we have
\[
F_X(x\mid y)
=\tau(X)R_X(x,y).
\]

Since $X$ is $(2k+r)$-regular, we have
\[
|E(X)|
=\frac{Nm(2k+r)}2.
\]
By Theorem \ref{thmCommuteTime}, 
\[
h_X(x,y)+h_X(y,x)
=Nm(2k+r)R_X(x,y).
\]
Combining these identities proves the assertion.
\end{proof}

In general, average hitting times are not necessarily symmetric. 
However, for two vertices having the same $G$-coordinate, 
they are symmetric because of the symmetry on the $C_N^k$-side. 

\begin{corollary}
\label{cor22}
Let $a\in V(G)$ and $1 \le \ell \le N-1$. 
Then
\[
h_X((0,a),(\ell,a))
=h_X((\ell,a),(0,a)), 
\]
and
\[
F_X((0,a)\mid(\ell,a))
=\frac{2\tau(X)}{Nm(2k+r)}h_X((0,a),(\ell,a)).
\]
\end{corollary}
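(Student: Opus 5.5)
To prove Corollary \ref{cor22}, I would first establish the symmetry $h_X((0,a),(\ell,a))=h_X((\ell,a),(0,a))$ and then substitute it into Proposition \ref{prop21}.

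For the symmetry, the most direct route is the spectral formula of Proposition \ref{prop04}. With $b=a$ it gives
\[
h_X((0,a),(\ell,a))=\sum_{j=1}^{N-1}\frac{1-\cos(\ell\theta_j)}{1-\xi_{j,0}}+m\sum_{\alpha=1}^{t}E_\alpha(a,a)\sum_{j=0}^{N-1}\frac{1-\cos(\ell\theta_j)}{1-\xi_{j,\alpha}},
\]
which depends only on $\ell$ through $\cos(\ell\theta_j)$. Next I use vertex-transitivity in the first coordinate. The translation $(x,c)\mapsto(x-\ell,c)$ is an automorphism of $X$, since adjacency in $C_N^k$ depends only on differences modulo $N$. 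It maps $(\ell,a)\mapsto(0,a)$ and $(0,a)\mapsto(-\ell,a)=(N-\ell,a)$. Because the law of the simple random walk is preserved by automorphisms,
\[
h_X((\ell,a),(0,a))=h_X((0,a),(N-\ell,a)).
\]
Applying the formula above with $N-\ell$ in place of $\ell$, and using $\cos((N-\ell)\theta_j)=\cos(2\pi j-\ell\theta_j)=\cos(\ell\theta_j)$, the two expressions coincide. Alternatively, I could use the reflection automorphism $(x,c)\mapsto(\ell-x,c)$, which swaps $(0,a)$ and $(\ell,a)$ directly; this map preserves $C_N^k$-adjacency because the set $\{\pm s\}$ is symmetric. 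I would mention it as the cleanest argument and keep the spectral check as confirmation.

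For the forest count, since $1\le\ell\le N-1$ the two vertices are distinct, so Proposition \ref{prop21} applies. It gives
\[
F_X((0,a)\mid(\ell,a))=\frac{\tau(X)}{Nm(2k+r)}\bigl\{h_X((0,a),(\ell,a))+h_X((\ell,a),(0,a))\bigr\}.
\]
Replacing the second hitting time by the first, using the symmetry, yields the stated factor $2$.

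None of this is hard. The only point requiring care is to justify that a graph automorphism preserves hitting times, and to verify that the chosen map really is an automorphism of the Cartesian product, with the $G$-coordinate fixed and the $C_N^k$ part an isometry of $\mathbb Z_N$ under the step set $\{\pm1,\dots,\pm k\}$.
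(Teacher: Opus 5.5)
Your proposal is correct and essentially matches the paper: the paper's proof uses exactly the reflection automorphism $\sigma(x,c)=(\ell-x,c)$, which swaps $(0,a)$ and $(\ell,a)$, and then substitutes the resulting symmetry into Proposition~\ref{prop21}. Your alternative check is also valid but not needed; it combines the translation $(x,c)\mapsto(x-\ell,c)$ with Proposition~\ref{prop04} and the identity $\cos((N-\ell)\theta_j)=\cos(\ell\theta_j)$.
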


\begin{proof}
The map
\[
\sigma(x,c)=(\ell-x,c)
\]
is an automorphism of $X$ satisfying
\[
\sigma(0,a)=(\ell,a), \qquad
\sigma(\ell,a)=(0,a). 
\]
Therefore, the average hitting times are symmetric. 
Substituting this identity into Proposition \ref{prop21} proves the assertion. 
\end{proof}

Using Theorem \ref{thm16}, second-order linear recurrence sequences also appear in the number of two-component spanning forests. 

Substituting Theorem \ref{thm16} into Corollary \ref{cor22}, 
we obtain the following result. 

\begin{corollary}
\label{cor23}
For each $\alpha=1, \ldots, t$, assume that all roots of $D_{k,\nu_\alpha}(x)$ are simple. 
Then, for $a \in V(G)$ and $1\le \ell \le N-1$, we have
\[
\begin{aligned}
F_X((0,a)\mid(\ell,a))
&=\frac{\tau(X)}{Nmk}h_{C_N^k}(0,\ell) \\
&\quad-\tau(X)\sum_{\alpha=1}^{t}E_\alpha(a,a)\sum_{c=1}^{k}\frac1{\gamma_{\alpha,c}D_{k,\nu_\alpha}'(\phi_{\alpha,c})}\frac{V_\ell^{(\alpha,c)}V_{N-\ell}^{(\alpha,c)}}{V_N^{(\alpha,c)}}. 
\end{aligned}
\]
\end{corollary}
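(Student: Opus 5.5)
The proof is a direct substitution of Theorem \ref{thm16} into the second identity of Corollary \ref{cor22}, followed by a simplification of the constants. Fix $a\in V(G)$ and $1\le\ell\le N-1$. The vertices $(0,a)$ and $(\ell,a)$ are then distinct, so Corollary \ref{cor22} applies and gives
\[
F_X((0,a)\mid(\ell,a))
=\frac{2\tau(X)}{Nm(2k+r)}\,h_X((0,a),(\ell,a)).
\]
The simplicity hypothesis on the roots of each $D_{k,\nu_\alpha}(x)$ is exactly the hypothesis of Theorem \ref{thm16}. We may therefore replace $h_X((0,a),(\ell,a))$ by the right-hand side of that theorem.

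We then distribute the prefactor $2\tau(X)/(Nm(2k+r))$ over the two terms. For the first term,
\[
\frac{2\tau(X)}{Nm(2k+r)}\cdot\frac{2k+r}{2k}\,h_{C_N^k}(0,\ell)
=\frac{\tau(X)}{Nmk}\,h_{C_N^k}(0,\ell).
\]
For the second term, the factor $-Nm(2k+r)/2$ from Theorem \ref{thm16} multiplies $2\tau(X)/(Nm(2k+r))$ to give exactly $-\tau(X)$. The double sum
\[
\sum_{\alpha=1}^{t}E_\alpha(a,a)\sum_{c=1}^{k}\frac{1}{\gamma_{\alpha,c}D'_{k,\nu_\alpha}(\phi_{\alpha,c})}\frac{V_\ell^{(\alpha,c)}V_{N-\ell}^{(\alpha,c)}}{V_N^{(\alpha,c)}}
\]
is carried over unchanged, which yields the claimed formula.

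There is no real obstacle here. The only points to check are that the restriction $\ell\ge1$ is needed because $F_X(x\mid y)$ is defined only for distinct vertices, and that the constant bookkeeping is correct. All denominators are nonzero by the discussion preceding Proposition \ref{prop15}: $\gamma_{\alpha,c}\neq0$, $V_N^{(\alpha,c)}\neq0$, and $D'_{k,\nu_\alpha}(\phi_{\alpha,c})\neq0$.
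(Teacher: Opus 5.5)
Your proposal is correct and matches the paper's proof: you substitute Theorem \ref{thm16} into the identity $F_X((0,a)\mid(\ell,a))=\frac{2\tau(X)}{Nm(2k+r)}h_X((0,a),(\ell,a))$ from Corollary \ref{cor22}, and the two prefactors simplify to $\tau(X)/(Nmk)$ and $-\tau(X)$, exactly as in the paper.
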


\begin{proof}
By Corollary \ref{cor22}, we have
\[
F_X((0,a)\mid(\ell,a))
=
\frac{2\tau(X)}{Nm(2k+r)}
h_X((0,a),(\ell,a)).
\]
Substituting the formula in Theorem \ref{thm16} into the right-hand side, 
we obtain
\[
\frac{2\tau(X)}{Nm(2k+r)}
\cdot
\frac{2k+r}{2k}
=
\frac{\tau(X)}{Nmk},
\]
and
\[
\frac{2\tau(X)}{Nm(2k+r)}
\cdot
\left(
-\frac{Nm(2k+r)}{2}
\right)
=
-\tau(X).
\]
Therefore, the desired formula follows.
\end{proof}

It is known that average hitting times on walk-regular graphs are symmetric
\cite{Georgakopoulos2012}.
In the present product setting, we verify the required symmetry directly from
Proposition \ref{prop04}.

\begin{corollary}
\label{cor24}
Suppose that $G$ is a walk-regular graph. 
Let $a, b \in V(G)$ and $0 \le \ell \le N-1$, and assume that $(0,a) \neq (\ell,b)$. 
Then
\[
h_X((0,a),(\ell,b))
=h_X((\ell,b),(0,a)). 
\]
Therefore,
\[
F_X((0,a)\mid(\ell,b))
=\frac{2\tau(X)}{Nm(2k+r)}h_X((0,a),(\ell,b)).
\]
\end{corollary}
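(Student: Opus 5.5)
We will verify the symmetry directly from Proposition \ref{prop04}. The point is that the hitting time from $(\ell,b)$ to $(0,a)$ can be rewritten in the same form as the one from $(0,a)$ to $(\ell,b)$ after using translation invariance on the $C_N^k$-side.

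\textbf{Step 1: the reverse hitting time.} The map $(x,c)\mapsto(x-\ell,c)$ is an automorphism of $X$ sending $(\ell,b)$ to $(0,b)$ and $(0,a)$ to $(-\ell,a)$. Hence
\[
h_X((\ell,b),(0,a))=h_X((0,b),(-\ell \bmod N,a)).
\]
We apply Proposition \ref{prop04} with the roles of $a,b$ exchanged and $\ell$ replaced by $N-\ell$ (or $0$ if $\ell=0$). Since $\cos((N-\ell)\theta_j)=\cos(\ell\theta_j)$, this gives
\[
h_X((\ell,b),(0,a))=\sum_{j=1}^{N-1}\frac{1-\cos(\ell\theta_j)}{1-\xi_{j,0}}
+m\sum_{\alpha=1}^{t}\sum_{j=0}^{N-1}\frac{E_\alpha(a,a)-E_\alpha(b,a)\cos(\ell\theta_j)}{1-\xi_{j,\alpha}}.
\]

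\textbf{Step 2: comparison.} We compare this with the formula for $h_X((0,a),(\ell,b))$ in Proposition \ref{prop04}. The $\alpha=0$ parts agree. Each $E_\alpha$ is a real symmetric orthogonal projection, so $E_\alpha(a,b)=E_\alpha(b,a)$, and the cosine terms agree as well. The only remaining difference is between $E_\alpha(b,b)$ and $E_\alpha(a,a)$. Since $G$ is walk-regular, Lemma \ref{lemWalkRegular} gives $E_\alpha(a,a)=E_\alpha(b,b)=m_\alpha/m$ for every $\alpha$. Therefore the two expressions coincide term by term, which proves the first identity.

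\textbf{Step 3: spanning forests.} The forest formula follows by substituting this symmetry into Proposition \ref{prop21} with $x=(0,a)$ and $y=(\ell,b)$. These vertices are distinct by hypothesis, so Proposition \ref{prop21} applies.

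The only delicate point is the translation step in Step 1: we must make sure that Proposition \ref{prop04}, which is stated with the starting vertex at coordinate $0$, applies after reindexing. This is handled by the automorphism together with the evenness of cosine. The case $\ell=0$ needs no reindexing, since then the translation is the identity.
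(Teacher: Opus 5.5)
Your proposal is correct and follows essentially the same route as the paper. You translate on the $C_N^k$-side, then apply Proposition \ref{prop04} using $\cos((N-\ell)\theta_j)=\cos(\ell\theta_j)$ and $E_\alpha(a,b)=E_\alpha(b,a)$, so the only discrepancy is the $E_\alpha(a,a)-E_\alpha(b,b)$ terms, which vanish by Lemma \ref{lemWalkRegular}; the forest formula then follows from Proposition \ref{prop21}.
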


\begin{proof}
By Lemma \ref{lemWalkRegular}, 
\[
E_\alpha(a,a)
=E_\alpha(b,b)
=\frac{m_\alpha}{m}. 
\]

On the other hand, by translation on the $C_N^k$-side, 
\[
h_X((\ell,b),(0,a))
=h_X((0,b),(N-\ell,a)).
\]
Using
\[
\cos((N-\ell)\theta_j)
=\cos(\ell\theta_j), \qquad
E_\alpha(b,a)=E_\alpha(a,b)
\]
in Proposition \ref{prop04}, 
the difference between the two average hitting times is determined only by the terms involving $E_\alpha(b,b)-E_\alpha(a,a)$. 
By walk-regularity, this difference is zero, 
and hence the average hitting times are symmetric. 
Applying Proposition \ref{prop21}, 
we obtain the formula for the number of two-component spanning forests. 
\end{proof}

\begin{remark}
\label{rem25}
The number of spanning trees is determined only by the Laplacian spectra of $C_N^k$ and $G$. 
On the other hand, the number of two-component spanning forests generally also involves information on the spectral projections
$E_\alpha(a,b)$. 

However, when the two vertices have the same $G$-coordinate, 
the second-order linear recurrence structure in Theorem \ref{thm16} is also reflected in the number of two-component spanning forests through Corollary \ref{cor23}. 
Furthermore, when $G$ is walk-regular, this formula can be described only in terms of the Laplacian eigenvalues of $G$ and their multiplicities. 
\end{remark}

\section{Explicit examples}
\label{sec06}
We first recall a standard fact that strongly regular graphs are walk-regular.

\begin{lemma}[\cite{GodsilMcKay1980}, Section 4]
\label{lemSRGWalkRegular}
Every strongly regular graph is walk-regular.
\end{lemma}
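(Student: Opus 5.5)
The plan is to verify the criterion of Lemma \ref{lemWalkRegular}: for each $\alpha$, the diagonal entry $E_\alpha(a,a)$ should be independent of $a$. Equivalently, by Definition \ref{defWalkRegular}, $(A_G^q)(a,a)$ should be independent of $a$ for every $q\ge0$. The key input is the defining property of a strongly regular graph $G$ with parameters $(m,r,\lambda,\mu)$:
\[
A_G^2=rI_m+\lambda A_G+\mu(J_m-I_m-A_G).
\]
This says that $A_G^2$ lies in the span of $I_m$, $A_G$, $J_m$.

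The first step is to show that this span is closed under multiplication by $A_G$. Since $G$ is $r$-regular, $A_GJ_m=J_mA_G=rJ_m$. The identity above then lets us rewrite $A_G\cdot A_G$ inside the span. Hence the algebra $\mathcal A=\operatorname{span}\{I_m,A_G,J_m\}$ contains every power $A_G^q$. An induction on $q$ makes this explicit: writing $A_G^q=x_qI_m+y_qA_G+z_qJ_m$ with scalars $x_q,y_q,z_q$ determined by the parameters, the next power is obtained by multiplying by $A_G$ and applying the two relations above.

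The second step is to read off the diagonal. Since $G$ is simple, $A_G(a,a)=0$; also $I_m(a,a)=1$ and $J_m(a,a)=1$. Therefore
\[
(A_G^q)(a,a)=x_q+z_q,
\]
which is independent of $a$. This is exactly walk-regularity.

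An alternative route uses the fact that a connected strongly regular graph has at most three distinct eigenvalues. Then the projections $E_\alpha$ are polynomials in $A_G$, obtained by Lagrange interpolation, and hence lie in $\mathcal A$. Their diagonals are constant, and Lemma \ref{lemWalkRegular} gives $E_\alpha(a,a)=m_\alpha/m$ directly.

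The only point needing care is degenerate cases: complete graphs, or disconnected strongly regular graphs such as disjoint unions of complete graphs. The algebraic argument above does not use connectivity, so it covers these cases as well. The main obstacle is therefore just bookkeeping the closure of $\mathcal A$ under multiplication.
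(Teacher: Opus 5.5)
Your argument is correct and matches the paper's proof: the strongly regular identity puts $A_G^2$ in $\operatorname{span}\{I_m,A_G,J_m\}$, the relation $A_GJ_m=J_mA_G=rJ_m$ lets induction place every power $A_G^q$ in that span, and the constant diagonals of $I_m$, $A_G$, $J_m$ then give walk-regularity. Your alternative route through the projections $E_\alpha$ and your remark on degenerate cases are not needed, but they do no harm.
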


\begin{proof}
For the reader's convenience, we give the proof.

Let $G$ be a strongly regular graph with adjacency matrix $A_G$.
Then there exist constants $\lambda$ and $\mu$ such that
\[
A_G^2
=
rI+\lambda A_G+\mu(J-I-A_G).
\]
Therefore, $A_G^2$ belongs to the linear span of
\[
I,\quad A_G,\quad J.
\]
Since $G$ is regular, we also have
\[
A_GJ=JA_G=rJ.
\]
It follows by induction that every power $A_G^q$ belongs to the linear span of
\[
I,\quad A_G,\quad J.
\]
Since the diagonal entries of $I$, $A_G$, and $J$ are independent of the
vertex, the diagonal entries of $A_G^q$ are independent of the vertex.
Hence $G$ is walk-regular.
\end{proof}

In this section, we apply the formulas obtained in the preceding sections
to complete graphs and strongly regular graphs.
Complete graphs are vertex-transitive, and hence walk-regular.
Moreover, by Lemma \ref{lemSRGWalkRegular}, strongly regular graphs are also
walk-regular.
Therefore, Corollary \ref{corWalkRegular} can be applied to the average
hitting time between two vertices having the same $G$-coordinate.

We denote the Laplacian spectrum of a graph $G$ by
$\operatorname{Spec}_L(G)$, and write $\lambda^{(q)}$ to indicate that the
Laplacian eigenvalue $\lambda$ has multiplicity $q$.
Here, the number in the superscript parentheses represents the multiplicity,
not a power.

For $\nu>0$, let
\[
D_{k,\nu}(x)
=k+\frac{\nu}{2}-\sum_{s=1}^{k}T_s\left(\frac{x}{2}\right). 
\]
When all of its roots are simple, 
we denote them by $\phi_{\nu,1},\ldots,\phi_{\nu,k}$, and put
\[
\rho_{\nu,c}+\rho_{\nu,c}^{-1}=\phi_{\nu,c}, \qquad 
\rho_{\nu,c}=\eta_{\nu,c}^{\,2}, \qquad
\gamma_{\nu,c}=\eta_{\nu,c}+\eta_{\nu,c}^{-1}.
\]

Furthermore, define the sequence $\{V_n^{(\nu,c)}\}_{n\ge0}$ by
\[
V_0^{(\nu,c)}=0, \qquad
V_1^{(\nu,c)}=1, \qquad
V_{n+2}^{(\nu,c)}=\gamma_{\nu,c}V_{n+1}^{(\nu,c)}-V_n^{(\nu,c)},
\]
and write
\[
\mathcal A_{k,\nu}^{(N)}(\ell)
=\sum_{c=1}^{k}\frac1{\gamma_{\nu,c}D_{k,\nu}'(\phi_{\nu,c})}\frac{V_\ell^{(\nu,c)}V_{N-\ell}^{(\nu,c)}}{
V_N^{(\nu,c)}}.
\]
Whenever this notation is used, 
we assume that all roots of $D_{k,\nu}(x)$ are simple. 

Let $G$ be a connected $r$-regular walk-regular graph on $m$ vertices.
Let the distinct nonzero Laplacian eigenvalues of $G$ be
\[
\nu_1,\ldots,\nu_t,
\]
with multiplicities $m_1,\ldots,m_t$. 
Then Corollary \ref{corWalkRegular} becomes
\begin{equation}
\label{eq:walk-regular-general}
\begin{aligned}
h_{C_N^k\square G}((0,a),(\ell,a))
&=\frac{2k+r}{2k}h_{C_N^k}(0,\ell) \\
&\quad-\frac{N(2k+r)}{2}\sum_{\alpha=1}^{t}m_\alpha\mathcal A_{k,\nu_\alpha}^{(N)}(\ell).
\end{aligned}
\end{equation}

\subsection{The case $k=1$}
When $k=1$,
\[
D_{1,\nu}(x)
=\frac{\nu+2-x}{2},
\]
and hence
\[
\phi_\nu=\nu+2, \qquad
D_{1,\nu}'(\phi_\nu)=-\frac12.
\]
Moreover, if we put
\[
\rho_\nu
=\frac{\nu+2+\sqrt{\nu(\nu+4)}}{2},
\]
then
\[
\rho_\nu+\rho_\nu^{-1}
=\nu+2.
\]
Furthermore, we take
\[
\gamma_\nu=\sqrt{\nu+4},
\]
and define the sequence $\{V_n^{(\nu)}\}_{n\ge0}$ by
\[
V_0^{(\nu)}=0, \qquad
V_1^{(\nu)}=1, \qquad
V_{n+2}^{(\nu)}=\sqrt{\nu+4}\,V_{n+1}^{(\nu)}-V_n^{(\nu)}. 
\]

\begin{lemma}
\label{lemChebyshevProduct}
Let $\rho\ne0$, and put
\[
a=\rho+\rho^{-1}.
\]
Then
\[
\prod_{j=0}^{N-1}\left(a-2\cos\frac{2\pi j}{N}\right)
=\rho^N+\rho^{-N}-2.
\]
\end{lemma}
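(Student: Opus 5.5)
The plan is to factor each term over $\omega=e^{2\pi i/N}$ and then use the cyclotomic product $\prod_{j=0}^{N-1}(z-\omega^j)=z^N-1$. For each $j$, writing $2\cos\theta_j=\omega^j+\omega^{-j}$ and $a=\rho+\rho^{-1}$, I will use the elementary factorization
\[
a-2\cos\theta_j
=\rho+\rho^{-1}-\omega^j-\omega^{-j}
=-\rho^{-1}\left(\omega^j-\rho\right)\left(\omega^{-j}-\rho\right)\omega^{j}\omega^{-j}\cdot(\text{sign check}),
\]
or more cleanly $a-2\cos\theta_j=\rho^{-1}(\rho-\omega^j)(\rho-\omega^{-j})$. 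This follows by expanding
\[
(\rho-\omega^j)(\rho-\omega^{-j})=\rho^2-\rho(\omega^j+\omega^{-j})+1
\]
and dividing by $\rho$. This step needs only $\rho\neq0$, so the degenerate cases excluded elsewhere in the paper, namely $\rho=\pm1$ and $\rho^N=1$, cause no trouble here.

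Next, I will take the product over $j=0,\ldots,N-1$. Since $j\mapsto -j \bmod N$ permutes $\mathbb Z_N$, both $\prod_j(\rho-\omega^j)$ and $\prod_j(\rho-\omega^{-j})$ equal $\rho^N-1$, because the $\omega^j$ run over all $N$-th roots of unity. Hence
\[
\prod_{j=0}^{N-1}\left(a-2\cos\theta_j\right)=\rho^{-N}(\rho^N-1)^2=\rho^N-2+\rho^{-N},
\]
which is the claim.

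There is no real obstacle. The only points needing care are getting the sign in the factorization right, which I will verify by the expansion above, and justifying the reindexing $j\mapsto -j$ modulo $N$. As a sanity check, the case $N=1$ gives $a-2=\rho+\rho^{-1}-2$.
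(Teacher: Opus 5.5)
Your argument is correct and is essentially the paper's proof: you factor $a-2\cos\theta_j=\rho^{-1}(\rho-\omega^j)(\rho-\omega^{-j})$, then use $\prod_{j=0}^{N-1}(\rho-\omega^{\pm j})=\rho^N-1$ to get $\rho^{-N}(\rho^N-1)^2=\rho^N+\rho^{-N}-2$. Drop your first tentative display, which as written has a spurious leading minus sign because $(\omega^j-\rho)(\omega^{-j}-\rho)=(\rho-\omega^j)(\rho-\omega^{-j})$; the ``cleaner'' factorization you then verify by expansion is the correct one.
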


\begin{proof}
Put $\omega=e^{2\pi i/N}$. 
Then
\[
2\cos\frac{2\pi j}{N}
=\omega^j+\omega^{-j}.
\]

Moreover, since 
\[
a=\rho+\rho^{-1},
\]
we have
\[
\begin{aligned}
a-\omega^j-\omega^{-j}
&=\rho+\rho^{-1}-\omega^j-\omega^{-j} \\
&=\frac{(\rho-\omega^j)(\rho-\omega^{-j})}{\rho}.
\end{aligned}
\]
Therefore,
\[
\begin{aligned}
\prod_{j=0}^{N-1}\left(a-2\cos\frac{2\pi j}{N}\right)
&=\rho^{-N}\prod_{j=0}^{N-1}(\rho-\omega^j)\prod_{j=0}^{N-1}(\rho-\omega^{-j}).
\end{aligned}
\]

The numbers $\omega^0,\omega^1,\ldots,\omega^{N-1}$ are all the $N$-th roots of unity, 
and $\omega^0,\omega^{-1},\ldots,\omega^{-(N-1)}$ form the same set. 
Therefore, 
\[
\prod_{j=0}^{N-1}(\rho-\omega^j)
=\rho^N-1,
\]
and
\[
\prod_{j=0}^{N-1}(\rho-\omega^{-j})
=\rho^N-1.
\]
Hence,
\[
\begin{aligned}
\prod_{j=0}^{N-1}\left(a-2\cos\frac{2\pi j}{N}\right)
&=\rho^{-N}(\rho^N-1)^2 \\
&=\rho^N+\rho^{-N}-2.
\end{aligned}
\]
\end{proof}


\begin{proposition}
\label{propK1WalkRegular}
Let $G$ be a connected $r$-regular walk-regular graph on $m$ vertices. 
Let its distinct nonzero Laplacian eigenvalues be $\nu_1, \ldots, \nu_t$, 
with multiplicities $m_1,\ldots,m_t$. 

Then, for any $a \in V(G)$ and $0 \le \ell \le N-1$, we have
\[
\begin{aligned}
h_{C_N\square G}((0,a),(\ell,a))
&=\frac{r+2}{2}\,\ell(N-\ell) \\
&\quad+N(r+2)\sum_{\alpha=1}^{t}\frac{m_\alpha}{\sqrt{\nu_\alpha+4}}\frac{V_\ell^{(\nu_\alpha)}V_{N-\ell}^{(\nu_\alpha)}}{
V_N^{(\nu_\alpha)}}.
\end{aligned}
\]
Moreover,
\[
\tau(C_N\square G)
=\frac{N}{m}\prod_{\alpha=1}^{t}\left(\rho_{\nu_\alpha}^{N}+\rho_{\nu_\alpha}^{-N}-2\right)^{m_\alpha}.
\]
\end{proposition}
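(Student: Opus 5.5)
The plan is to specialize the general formula \eqref{eq:walk-regular-general} (that is, Corollary \ref{corWalkRegular}) to $k=1$, and then to handle the spanning tree count separately through Corollary \ref{cor20} and Lemma \ref{lemChebyshevProduct}.

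\textbf{The hitting-time formula.} Here $D_{1,\nu}$ is linear, so its root $\phi_\nu=\nu+2$ is automatically simple and $D_{1,\nu}'(\phi_\nu)=-\tfrac12$. The root satisfies $\phi_\nu>2$, so $\rho_\nu$ as given in the statement is a real number greater than $1$. I will take $\eta_\nu=\sqrt{\rho_\nu}>0$. Then
\[
\gamma_\nu^2=\phi_\nu+2=\nu+4,
\]
and since $\gamma_\nu>0$ this gives $\gamma_\nu=\sqrt{\nu+4}$, which is consistent with the chosen recurrence. Substituting these values into $\mathcal A_{1,\nu}^{(N)}(\ell)$ gives
\[
\mathcal A_{1,\nu}^{(N)}(\ell)=-\frac{2}{\sqrt{\nu+4}}\,\frac{V_\ell^{(\nu)}V_{N-\ell}^{(\nu)}}{V_N^{(\nu)}}.
\]
The correction term in \eqref{eq:walk-regular-general} is therefore
\[
-\frac{N(r+2)}{2}\sum_\alpha m_\alpha\mathcal A_{1,\nu_\alpha}^{(N)}(\ell)=N(r+2)\sum_\alpha \frac{m_\alpha}{\sqrt{\nu_\alpha+4}}\,\frac{V_\ell^{(\nu_\alpha)} V_{N-\ell}^{(\nu_\alpha)}}{V_N^{(\nu_\alpha)}}.
\]
For the main term I use the classical cycle formula $h_{C_N}(0,\ell)=\ell(N-\ell)$, which multiplied by $\tfrac{r+2}{2}$ gives the first term. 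If that formula is not to be cited, it follows from the standard first-step equations $h(x)=1+\tfrac12\bigl(h(x-1)+h(x+1)\bigr)$ with $h(\ell)=0$, or equivalently from the case $k=1$ of \cite{MiezakiTamura2026}.

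\textbf{The spanning tree count.} By Corollary \ref{cor20} with $k=1$, and using $\tau(C_N)=N$, it suffices to compute $2^N\prod_{j}D_{1,\nu}(2\cos\theta_j)$. Since
\[
2D_{1,\nu}(2\cos\theta_j)=\nu+2-2\cos\theta_j=\phi_\nu-2\cos\theta_j,
\]
we get
\[
2^N\prod_j D_{1,\nu}(2\cos\theta_j)=\prod_j\bigl(\phi_\nu-2\cos\theta_j\bigr).
\]
Lemma \ref{lemChebyshevProduct} applied with $a=\phi_\nu=\rho_\nu+\rho_\nu^{-1}$ evaluates this product as $\rho_\nu^N+\rho_\nu^{-N}-2$. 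Raising to the power $m_\alpha$ and taking the product over $\alpha$ gives the stated formula.

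\textbf{Main obstacle.} There is no real difficulty; the only points needing care are the bookkeeping of the choices. One must check that the specific choices of $\rho_\nu$, $\eta_\nu$ and $\gamma_\nu$ are legitimate, which is guaranteed by the independence remark after Theorem \ref{thm16}. One must also track the signs correctly, since $D'=-\tfrac12$ combines with the leading minus sign to produce the plus sign in the statement.
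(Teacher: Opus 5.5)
Your proposal is correct and follows essentially the same route as the paper: you specialize Eq.~\eqref{eq:walk-regular-general} to $k=1$ using $h_{C_N}(0,\ell)=\ell(N-\ell)$ and $\mathcal A_{1,\nu}^{(N)}(\ell)=-\frac{2}{\sqrt{\nu+4}}\frac{V_\ell^{(\nu)}V_{N-\ell}^{(\nu)}}{V_N^{(\nu)}}$, and you get the spanning tree count from Proposition~\ref{prop19} (via its restatement, Corollary~\ref{cor20}) together with Lemma~\ref{lemChebyshevProduct}. Your explicit check that choosing $\eta_\nu=\sqrt{\rho_\nu}>0$ gives $\gamma_\nu=\sqrt{\nu+4}$ is a detail the paper leaves implicit.
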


\begin{proof}
Substitute $k=1$ into Eq. \eqref{eq:walk-regular-general}. 
In this case, 
\[
h_{C_N}(0,\ell)=\ell(N-\ell),
\]
and
\[
\mathcal A_{1,\nu}^{(N)}(\ell)
=-\frac{2}{\sqrt{\nu+4}}\frac{V_\ell^{(\nu)}V_{N-\ell}^{(\nu)}}{V_N^{(\nu)}}. 
\]
Therefore, we obtain the formula for the average hitting time. 

Moreover, by Proposition \ref{prop19} and Lemma \ref{lemChebyshevProduct}, 
\[
\begin{aligned}
\tau(C_N\square G)
&=\frac{N}{m}\prod_{\alpha=1}^{t}\prod_{j=0}^{N-1}\left(\nu_\alpha+2-2\cos\theta_j\right)^{m_\alpha} \\
&=\frac{N}{m}\prod_{\alpha=1}^{t}\left(\rho_{\nu_\alpha}^{N}+\rho_{\nu_\alpha}^{-N}-2\right)^{m_\alpha}.
\end{aligned}
\]
\end{proof}

\begin{remark}
\label{remK1}
When $k=1$, one recurrence relation
\[
V_{n+2}^{(\nu_\alpha)}
=\sqrt{\nu_\alpha+4}\,V_{n+1}^{(\nu_\alpha)}-V_n^{(\nu_\alpha)}
\]
corresponds to each nonzero Laplacian eigenvalue $\nu_\alpha$ of $G$. 
\end{remark}

\subsection{Complete graphs}
Let $m\ge2$, and let $G=K_m$. 
The graph $K_m$ is $(m-1)$-regular. 
By the standard spectrum of complete graphs
\cite{BrouwerHaemers2012}, its Laplacian spectrum is
\[
\operatorname{Spec}_L(K_m)
=\left\{0^{(1)},m^{(m-1)}\right\}.
\]

\begin{proposition}
\label{propCompleteGeneral}
Assume that all roots of $D_{k,m}(x)$ are simple, 
and denote them by $\phi_{m,1},\ldots,\phi_{m,k}$. 
Moreover, put
\[
\mathcal G_{m,c}(\ell)
=\frac{N\left(\rho_{m,c}^{\ell}+\rho_{m,c}^{N-\ell}\right)}{\left(\rho_{m,c}-\rho_{m,c}^{-1}\right)\left(\rho_{m,c}^{N}-1\right)}.
\]

Then, for $a,b\in V(K_m)$ and $0\le\ell\le N-1$, we have
\[
\begin{aligned}
h_{C_N^k\square K_m}((0,a),(\ell,b))
&=\frac{2k+m-1}{2k}h_{C_N^k}(0,\ell)\\
&\quad-\frac{2k+m-1}{2}
\sum_{c=1}^{k}
\frac{
(m-1)\mathcal G_{m,c}(0)
-(m\delta_{ab}-1)\mathcal G_{m,c}(\ell)
}{
D_{k,m}'(\phi_{m,c})
}.
\end{aligned}
\]
In particular, when $a=b$,
\[
\begin{aligned}
h_{C_N^k\square K_m}((0,a),(\ell,a))
&=\frac{2k+m-1}{2k}h_{C_N^k}(0,\ell) \\
&\quad-\frac{N(2k+m-1)(m-1)}{2}\mathcal A_{k,m}^{(N)}(\ell).
\end{aligned}
\]
\end{proposition}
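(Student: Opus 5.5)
The plan is to specialize the general Green-type formula of Proposition \ref{prop12}, combined with Corollary \ref{cor07}, to $G=K_m$. For the first formula we only need the spectral projections of $K_m$; for the second we then invoke Theorem \ref{thm16}, or equivalently Eq. \eqref{eq:walk-regular-general}.

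\textbf{Step 1: spectral data of $K_m$.} Here $r=m-1$ and $t=1$, with $\nu_1=m$ and $m_1=m-1$. Since $I_m=E_0+E_1$ and $E_0=\frac1mJ_m$, we have
\[
E_1=I_m-\tfrac1mJ_m,\qquad E_1(a,b)=\delta_{ab}-\tfrac1m .
\]
Hence
\[
mE_1(b,b)=m-1,\qquad mE_1(a,b)=m\delta_{ab}-1 .
\]

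\textbf{Step 2: the general formula.} Substitute these values into Proposition \ref{prop12}. The sum over $\alpha$ collapses to the single term $\alpha=1$. The prefactor $\frac{m(2k+r)}{2}$ becomes $\frac{2k+m-1}{2}$ once the factor $m$ is absorbed into the projection entries. The numerator becomes
\[
(m-1)\mathcal G_{m,c}(0)-(m\delta_{ab}-1)\mathcal G_{m,c}(\ell).
\]
Adding the constant-eigenspace term $\frac{2k+m-1}{2k}h_{C_N^k}(0,\ell)$ from Corollary \ref{cor07} gives the first display. The roots $\phi_{m,c}$, $\rho_{m,c}$ and the quantities $\mathcal G_{m,c}$ are exactly the $\alpha=1$ objects of Section \ref{sec04}, relabelled by $\nu=m$.

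\textbf{Step 3: the case $a=b$.} Setting $\delta_{ab}=1$, both coefficients equal $m-1$. The numerator is therefore $(m-1)\bigl(\mathcal G_{m,c}(0)-\mathcal G_{m,c}(\ell)\bigr)$. Proposition \ref{prop15} rewrites this difference as
\[
\frac{N}{\gamma_{m,c}}\,\frac{V_\ell V_{N-\ell}}{V_N}.
\]
Summing over $c$ yields $N(m-1)\mathcal A_{k,m}^{(N)}(\ell)$, which is the second display.

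Alternatively, $K_m$ is vertex-transitive, hence walk-regular. So Eq. \eqref{eq:walk-regular-general} with $t=1$ and $m_1=m-1$ gives the same formula directly, which serves as a consistency check.

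The main point requiring care is bookkeeping rather than any genuine obstacle. One must make sure the factor $m$ in $\frac{m(2k+r)}{2}$ is combined correctly with $E_1(a,b)=\delta_{ab}-1/m$, and that $r=m-1$ is substituted consistently in $2k+r$.
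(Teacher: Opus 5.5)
Your proposal is correct and matches the paper's proof: read off $E_1=I_m-\frac1mJ_m$, so $mE_1(b,b)=m-1$ and $mE_1(a,b)=m\delta_{ab}-1$, then substitute into Proposition \ref{prop12} together with Corollary \ref{cor07} (with $r=m-1$) to get the general formula. For the case $a=b$, the paper cites Corollary \ref{corWalkRegular}, whereas you apply Proposition \ref{prop15} directly; this is the same computation that Theorem \ref{thm16} and Corollary \ref{corWalkRegular} carry out, so the difference is only one of presentation.
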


\begin{proof}
The orthogonal projection onto the nonzero Laplacian eigenspace is
\[
E_1=I_m-\frac1mJ_m,
\]
and
\[
mE_1(b,b)=m-1, \qquad
mE_1(a,b)=m\delta_{ab}-1. 
\]
Substituting these identities into Proposition \ref{prop12} gives the first
formula.

When $a=b$, the second formula follows from Corollary \ref{corWalkRegular}
with the single nonzero Laplacian eigenvalue $m$ of multiplicity $m-1$.
\end{proof}

This gives the formula when the second factor is a complete graph. 
In particular, taking $m=2$ and $k=1$, we obtain the following result. 

\begin{corollary}
\label{corLadder}
Let $X=C_N\square K_2$, and let
\[
V(K_2)=\{0,1\}.
\]
Then, for $0\le\ell\le N-1$ and $s\in\{0,1\}$, we have
\[
\begin{aligned}
h_X((0,0),(\ell,s))
&=\frac32\,\ell(N-\ell) \\
&\quad+\frac{\sqrt3\,N}{2}\frac{1+\rho^N-(-1)^s\left(\rho^\ell+\rho^{N-\ell}\right)}{\rho^N-1},
\end{aligned}
\]
where $\rho=2+\sqrt3$. 
\end{corollary}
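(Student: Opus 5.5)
The plan is to obtain Corollary \ref{corLadder} by specializing the first (general $a,b$) formula of Proposition \ref{propCompleteGeneral} to $k=1$ and $m=2$, taking $a=0$ and $b=s$. In this case $C_N^1=C_N$, so $N\ge 3$ is the standing assumption. Also $K_2$ is $1$-regular, hence $r=1$ and $2k+m-1=3$. The first term then becomes
\[
\frac{3}{2}h_{C_N}(0,\ell)=\frac32\,\ell(N-\ell),
\]
using $h_{C_N}(0,\ell)=\ell(N-\ell)$, which was already used in the proof of Proposition \ref{propK1WalkRegular}.

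Next I will compute the data attached to $D_{1,2}$. By the $k=1$ computation at the start of that subsection, $D_{1,2}(x)=(4-x)/2$. Its single root is $\phi=4$, which is simple, so the hypothesis of Proposition \ref{propCompleteGeneral} holds, and $D_{1,2}'(\phi)=-1/2$. Solving $\rho+\rho^{-1}=4$ gives $\rho=2+\sqrt3$ (the choice $\rho_\nu$ of the $k=1$ subsection with $\nu=2$), so that $\rho-\rho^{-1}=2\sqrt3$. Substituting into the definition of $\mathcal G_{m,c}$ gives
\[
\mathcal G(\ell)=\frac{N(\rho^\ell+\rho^{N-\ell})}{2\sqrt3(\rho^N-1)},\qquad
\mathcal G(0)=\frac{N(1+\rho^N)}{2\sqrt3(\rho^N-1)}.
\]

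The coefficients are $m-1=1$ and $m\delta_{ab}-1=2\delta_{0s}-1=(-1)^s$. The correction term is therefore
\[
-\frac32\cdot\frac{\mathcal G(0)-(-1)^s\mathcal G(\ell)}{-1/2}
=3\bigl(\mathcal G(0)-(-1)^s\mathcal G(\ell)\bigr).
\]
Since $3N/(2\sqrt3)=\sqrt3 N/2$, this equals the stated expression.

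There is no real obstacle here; the argument is pure substitution. The only points needing care are the sign bookkeeping, namely that $D'=-1/2$ cancels the leading minus sign, and the identification $2\delta_{0s}-1=(-1)^s$. Finally, by the remark following Theorem \ref{thm16}, the choice of $\rho$ versus $\rho^{-1}$ is immaterial, since $\mathcal G$ is invariant under $\rho\mapsto\rho^{-1}$.
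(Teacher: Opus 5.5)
Your proposal is correct and matches the paper's proof: both specialize the general-$(a,b)$ formula of Proposition \ref{propCompleteGeneral} to $k=1$, $m=2$, using $\phi=4$, $D_{1,2}'(4)=-1/2$, $\rho=2+\sqrt3$, $\rho-\rho^{-1}=2\sqrt3$ and $2\delta_{0s}-1=(-1)^s$. One small citation slip: the invariance of $\mathcal G$ under $\rho\mapsto\rho^{-1}$ is noted right after the definition of $\mathcal G_{\alpha,c}$, not in the remark after Theorem \ref{thm16}, and you do not need it anyway because the statement fixes $\rho=2+\sqrt3$.
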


\begin{proof}
We have
\[
D_{1,2}(x)=2-\frac{x}{2}, \qquad
\phi=4, \qquad
D_{1,2}'(4)=-\frac12.
\]
Moreover, the solution $\rho>1$ satisfying
\[
\rho+\rho^{-1}=4, \qquad
\rho-\rho^{-1}=2\sqrt3
\]
is
$\rho=2+\sqrt3$. 
Substituting these identities and
\[
2\delta_{0s}-1=(-1)^s
\]
into Proposition \ref{propCompleteGeneral} proves the assertion. 
\end{proof}

\subsection{Strongly regular graphs}
Let $G$ be a nontrivial strongly regular graph with parameters
\[
(m,r,\lambda,\mu).
\]
Thus, $G$ has $m$ vertices, is $r$-regular, 
any two adjacent vertices have exactly $\lambda$ common neighbors, 
and any two non-adjacent vertices have exactly $\mu$ common neighbors. 
We use the standard spectral properties of strongly regular graphs; see
\cite{BrouwerHaemers2012}. 
Let the eigenvalues of its adjacency matrix be
\[
r,\qquad
\vartheta,\qquad
\zeta.
\]
If the multiplicities of $\vartheta$ and $\zeta$ are $f$ and $g$, respectively, then
\[
\operatorname{Spec}_L(G)
=\left\{0^{(1)},(r-\vartheta)^{(f)},(r-\zeta)^{(g)}\right\}.
\]
In what follows, we write
\[
\nu_\vartheta=r-\vartheta, \qquad
\nu_\zeta=r-\zeta. 
\]

\begin{proposition}
\label{propSRG}
Assume that all roots of $D_{k,\nu_\vartheta}(x)$ and $D_{k,\nu_\zeta}(x)$ are simple. 
Then, for $a\in V(G)$ and $0\le\ell\le N-1$, we have 
\[
\begin{aligned}
h_{C_N^k\square G}((0,a),(\ell,a))
&=\frac{2k+r}{2k}h_{C_N^k}(0,\ell) \\
&\quad-\frac{N(2k+r)}{2}\left(f\mathcal A_{k,\nu_\vartheta}^{(N)}(\ell)+g\mathcal A_{k,\nu_\zeta}^{(N)}(\ell)\right).
\end{aligned}
\]

Moreover, without the assumption that the roots are simple, we have
\[
\tau(C_N^k\square G)
=\frac{\tau(C_N^k)}{m}\prod_{j=0}^{N-1}\left(\kappa_j+\nu_\vartheta\right)^f\left(\kappa_j+\nu_\zeta\right)^g.
\]
\end{proposition}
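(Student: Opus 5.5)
The plan is to reduce both assertions to results already established, using Lemma \ref{lemSRGWalkRegular} to place $G$ within the walk-regular framework. Since $G$ is nontrivial strongly regular, it is connected and $r$-regular. By Lemma \ref{lemSRGWalkRegular} it is walk-regular, so Corollary \ref{corWalkRegular}, in the form \eqref{eq:walk-regular-general}, applies to $h_{C_N^k\square G}((0,a),(\ell,a))$.

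The first task is to identify the spectral data entering \eqref{eq:walk-regular-general}. The adjacency eigenvalues $\vartheta$ and $\zeta$ are distinct from $r$ and from each other, because a connected nontrivial strongly regular graph has exactly three distinct eigenvalues. Hence the distinct nonzero Laplacian eigenvalues are $\nu_\vartheta=r-\vartheta$ and $\nu_\zeta=r-\zeta$, so $t=2$, with multiplicities $f$ and $g$. Both are strictly positive, since $r$ is a simple eigenvalue and the largest one.

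The ordering convention $\nu_1<\nu_2$ is irrelevant here, because the sum over $\alpha$ in \eqref{eq:walk-regular-general} is symmetric in the labels. Under the stated simplicity hypothesis, the quantities $\mathcal A_{k,\nu_\vartheta}^{(N)}(\ell)$ and $\mathcal A_{k,\nu_\zeta}^{(N)}(\ell)$ are well defined. Substituting $m_1\mathcal A_{k,\nu_1}^{(N)}+m_2\mathcal A_{k,\nu_2}^{(N)}=f\mathcal A_{k,\nu_\vartheta}^{(N)}+g\mathcal A_{k,\nu_\zeta}^{(N)}$ into \eqref{eq:walk-regular-general} then gives the hitting-time formula.

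For the spanning tree count, I will apply the second formula of Proposition \ref{prop19}. That formula holds without the simplicity assumption, by Remark \ref{remSimpleRoots}. With $t=2$ and the identified eigenvalues and multiplicities, the double product $\prod_{\alpha=1}^{2}\prod_{j=0}^{N-1}(\kappa_j+\nu_\alpha)^{m_\alpha}$ combines into the single product over $j$ shown in the statement.

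The only point needing care is the claim that there are exactly two distinct nonzero Laplacian eigenvalues, i.e.\ that $\vartheta\ne\zeta$ and that both differ from $r$. I will justify this by the standard fact cited from \cite{BrouwerHaemers2012}: a nontrivial (connected, non-complete) strongly regular graph has exactly three distinct adjacency eigenvalues, with $r$ simple. Everything else is direct substitution.
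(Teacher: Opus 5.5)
Your proposal is correct and follows the paper's proof: walk-regularity via Lemma \ref{lemSRGWalkRegular}, substitution of the two nonzero Laplacian eigenvalues $\nu_\vartheta,\nu_\zeta$ with multiplicities $f,g$ into \eqref{eq:walk-regular-general}, and the second formula of Proposition \ref{prop19} (which needs no simplicity assumption) for $\tau$. One small caveat: take connectivity of $G$ from the paper's standing hypothesis rather than deducing it from nontriviality. Under common conventions a graph such as $2K_3$ is a nontrivial but disconnected strongly regular graph, and for it $r$ is not a simple eigenvalue.
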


\begin{proof}
By Lemma \ref{lemSRGWalkRegular}, the graph $G$ is walk-regular.
The nonzero Laplacian eigenvalues of $G$ are
\[
\nu_\vartheta=r-\vartheta,
\qquad
\nu_\zeta=r-\zeta,
\]
with multiplicities $f$ and $g$, respectively.
Substituting these data into Equation \eqref{eq:walk-regular-general} gives
the formula for the average hitting time.

The formula for the number of spanning trees follows by substituting the
same Laplacian spectrum into Proposition \ref{prop19}.
This part does not require the simplicity assumption on the roots.
\end{proof}

\subsubsection{Complete bipartite graphs}
Let $s \ge 2$, and let $G=K_{s,s}$.
Then $G$ is a strongly regular graph with parameters
\[
(2s,s,0,s).
\]
By the standard spectrum of complete bipartite graphs
\cite{BrouwerHaemers2012}, its Laplacian spectrum is
\[
\operatorname{Spec}_L(K_{s,s})
=\left\{0^{(1)},s^{(2s-2)},(2s)^{(1)}\right\}.
\]
That is, the multiplicities of the Laplacian eigenvalues
$0$, $s$, and $2s$ are $1$, $2s-2$, and $1$, respectively.

\begin{corollary}
\label{corCompleteBipartite}
Assume that all roots of $D_{k,s}(x)$ and $D_{k,2s}(x)$ are simple. 
Then, for $a\in V(K_{s,s})$ and $0\le\ell\le N-1$, we have
\[
\begin{aligned}
h_{C_N^k\square K_{s,s}}((0,a),(\ell,a))
&=\frac{2k+s}{2k}h_{C_N^k}(0,\ell) \\
&\quad-\frac{N(2k+s)}{2}\left((2s-2)\mathcal A_{k,s}^{(N)}(\ell)+\mathcal A_{k,2s}^{(N)}(\ell)\right).
\end{aligned}
\]

In particular, when $k=1$,
\[
\begin{aligned}
h_{C_N\square K_{s,s}}((0,a),(\ell,a))
&=\frac{s+2}{2}\,\ell(N-\ell) \\
&\quad+N(s+2)
\left(\frac{2s-2}{\sqrt{s+4}}\frac{V_\ell^{(s)}V_{N-\ell}^{(s)}}{V_N^{(s)}}+\frac1{\sqrt{2s+4}}\frac{V_\ell^{(2s)}V_{N-\ell}^{(2s)}}{V_N^{(2s)}}
\right).
\end{aligned}
\]
\end{corollary}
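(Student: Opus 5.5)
The plan is to specialize Proposition \ref{propSRG} to $G=K_{s,s}$ with the data $m=2s$, $r=s$, and then specialize further to $k=1$ using Proposition \ref{propK1WalkRegular}. Since $K_{s,s}$ is strongly regular with parameters $(2s,s,0,s)$, Lemma \ref{lemSRGWalkRegular} shows that it is walk-regular. Its distinct nonzero Laplacian eigenvalues are $\nu_1=s$ with multiplicity $m_1=2s-2$ and $\nu_2=2s$ with multiplicity $m_2=1$. These are distinct because $s\ge2$, so $t=2$.

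For the first formula, I would substitute these data into Eq. \eqref{eq:walk-regular-general}. This is legitimate under the stated simplicity hypothesis on the roots of $D_{k,s}$ and $D_{k,2s}$. The coefficient $(2k+r)/(2k)$ becomes $(2k+s)/(2k)$. The correction term becomes
\[
-\frac{N(2k+s)}{2}\left((2s-2)\mathcal A^{(N)}_{k,s}(\ell)+\mathcal A^{(N)}_{k,2s}(\ell)\right).
\]
This is exactly the claimed identity.

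For $k=1$, I would invoke Proposition \ref{propK1WalkRegular} directly with $r=s$ and the same spectrum. The leading term is $\frac{s+2}{2}\ell(N-\ell)$. The correction term is $N(s+2)\sum_\alpha \frac{m_\alpha}{\sqrt{\nu_\alpha+4}}\frac{V_\ell V_{N-\ell}}{V_N}$, which gives the coefficients $\frac{2s-2}{\sqrt{s+4}}$ and $\frac{1}{\sqrt{2s+4}}$. For $k=1$ the simplicity assumption is automatic, since $D_{1,\nu}$ is linear. Alternatively, the $k=1$ case follows from the first formula using $\mathcal A^{(N)}_{1,\nu}(\ell)=-\frac{2}{\sqrt{\nu+4}}\frac{V_\ell V_{N-\ell}}{V_N}$, as established in that proof.

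There is no real obstacle here. The only point needing care is checking that the Laplacian spectrum of $K_{s,s}$ is $\{0^{(1)},s^{(2s-2)},(2s)^{(1)}\}$. This follows from its adjacency eigenvalues $s,0,-s$ with multiplicities $1,2s-2,1$, together with $L=sI-A$.
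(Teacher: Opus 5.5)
Your proposal is correct and is essentially the paper's own proof: the paper likewise substitutes $\operatorname{Spec}_L(K_{s,s})=\{0^{(1)},s^{(2s-2)},(2s)^{(1)}\}$ (with $m=2s$, $r=s$) into Proposition~\ref{propSRG}, i.e.\ Eq.~\eqref{eq:walk-regular-general}, and, for $k=1$, into Proposition~\ref{propK1WalkRegular}. One small aside: $s\neq 2s$ already holds for every $s\ge 1$; what $s\ge 2$ actually guarantees is that the multiplicity $2s-2$ is positive, so that $s$ is genuinely a Laplacian eigenvalue.
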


\begin{proof}
It is sufficient to substitute the Laplacian spectrum of $K_{s,s}$ into Proposition \ref{propSRG} and Proposition \ref{propK1WalkRegular}. 
\end{proof}


\subsubsection{The Petersen graph}
Let $\mathcal P$ denote the Petersen graph. 
It is a strongly regular graph with parameters $(10,3,0,1)$. 
By the standard spectrum of the Petersen graph
\cite{BrouwerHaemers2012}, its Laplacian spectrum is
\[
\operatorname{Spec}_L(\mathcal P)
=\left\{0^{(1)},2^{(5)},5^{(4)}\right\}.
\]
That is, the multiplicities of the Laplacian eigenvalues
$0$, $2$, and $5$ are $1$, $5$, and $4$, respectively. 

\begin{corollary}
\label{corPetersen}
Assume that all roots of $D_{k,2}(x)$ and $D_{k,5}(x)$ are simple. 
Then, for $a\in V(\mathcal P)$ and $0\le\ell\le N-1$, we have
\[
\begin{aligned}
h_{C_N^k\square\mathcal P}((0,a),(\ell,a))
&=\frac{2k+3}{2k}h_{C_N^k}(0,\ell) \\
&\quad-\frac{N(2k+3)}{2}\left(5\mathcal A_{k,2}^{(N)}(\ell)+4\mathcal A_{k,5}^{(N)}(\ell)\right).
\end{aligned}
\]

In particular, when $k=1$, 
\[
\begin{aligned}
h_{C_N\square\mathcal P}((0,a),(\ell,a))
&=\frac52\,\ell(N-\ell) \\
&\quad+5N
\left(\frac5{\sqrt6}\frac{V_\ell^{(2)}V_{N-\ell}^{(2)}}{V_N^{(2)}}+\frac43\frac{V_\ell^{(5)}V_{N-\ell}^{(5)}}{V_N^{(5)}}\right),
\end{aligned}
\]
where
\[
V_{n+2}^{(2)}=\sqrt6\,V_{n+1}^{(2)}-V_n^{(2)}, \qquad
V_{n+2}^{(5)}=3V_{n+1}^{(5)}-V_n^{(5)}.
\]
\end{corollary}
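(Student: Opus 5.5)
The plan is to obtain both formulas by specializing the general walk-regular results to the Laplacian data of $\mathcal P$. No new analysis is needed. The only inputs are the parameters $m=10$ and $r=3$, the Laplacian spectrum $\operatorname{Spec}_L(\mathcal P)=\{0^{(1)},2^{(5)},5^{(4)}\}$, and the fact that $\mathcal P$ is strongly regular with parameters $(10,3,0,1)$, hence walk-regular by Lemma \ref{lemSRGWalkRegular}.

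For the general formula, I would apply Proposition \ref{propSRG} with $\nu_\vartheta=2$, $f=5$ and $\nu_\zeta=5$, $g=4$. The adjacency eigenvalues are $\vartheta=1$ and $\zeta=-2$, so indeed $r-\vartheta=2$ and $r-\zeta=5$. The simplicity hypothesis in the statement is exactly the one required there. The prefactors become $\frac{2k+r}{2k}=\frac{2k+3}{2k}$ and $\frac{N(2k+r)}{2}=\frac{N(2k+3)}{2}$, which yields the first display directly.

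For the case $k=1$, I would instead invoke Proposition \ref{propK1WalkRegular}. When $k=1$ the polynomial $D_{1,\nu}(x)=(\nu+2-x)/2$ is linear, so the simplicity assumption holds automatically and no hypothesis is needed. With $r=3$:
\begin{itemize}
\item the leading term becomes $\frac{r+2}{2}\ell(N-\ell)=\frac52\ell(N-\ell)$;
\item the global factor is $N(r+2)=5N$;
\item the coefficients $\frac{m_\alpha}{\sqrt{\nu_\alpha+4}}$ evaluate to $\frac{5}{\sqrt6}$ for $\nu=2$ and to $\frac{4}{\sqrt9}=\frac43$ for $\nu=5$.
\end{itemize}
The recurrences are $V_{n+2}^{(\nu)}=\sqrt{\nu+4}\,V_{n+1}^{(\nu)}-V_n^{(\nu)}$. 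This gives $\sqrt6$ for $\nu=2$ and $3$ for $\nu=5$, matching the stated relations.

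There is no genuine obstacle. The only point requiring care is consistency of the sign and normalization when passing from $\mathcal A_{1,\nu}^{(N)}(\ell)$ to the $V$-ratio. This is already recorded in the proof of Proposition \ref{propK1WalkRegular} as $\mathcal A_{1,\nu}^{(N)}(\ell)=-\frac{2}{\sqrt{\nu+4}}\frac{V_\ell^{(\nu)}V_{N-\ell}^{(\nu)}}{V_N^{(\nu)}}$, with the choice $\gamma_\nu=\sqrt{\nu+4}$. As a cross-check, I would substitute this into the first display at $k=1$: the factor $-\frac{5N}{2}\cdot(-2)=5N$ confirms that the two displays agree.
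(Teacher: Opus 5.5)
Your proposal is correct and is the paper's own argument: substitute $r=3$ and $\operatorname{Spec}_L(\mathcal P)=\{0^{(1)},2^{(5)},5^{(4)}\}$ into Proposition \ref{propSRG} and Proposition \ref{propK1WalkRegular}. Your evaluations of the coefficients $\frac{5}{\sqrt6}$ and $\frac43$ and of the recurrence coefficients $\sqrt6$ and $3$ are accurate, and so is your remark that simplicity of the roots is automatic when $k=1$.
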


\begin{proof}
It is sufficient to substitute the Laplacian spectrum of $\mathcal P$ into Proposition \ref{propSRG} and Proposition \ref{propK1WalkRegular}. 
\end{proof}

\section{Conclusion and future problems}
\label{sec07}

In this paper, for a connected $r$-regular graph $G$ on $m$ vertices, 
we investigated the average hitting times of the simple random walk on the Cartesian product graph $C_N^k\square G$. 

By combining discrete Fourier analysis in the $C_N^k$ direction with the Laplacian spectral decomposition of $G$, 
we decomposed the average hitting time into a component proportional to the average hitting time on $C_N^k$ and correction terms arising from the nonzero Laplacian eigenspaces of $G$. 
Furthermore, for each nonzero Laplacian eigenvalue, 
we introduced a Chebyshev-type polynomial, 
and when all of its roots are simple, 
we expressed the correction term as a finite Green-type sum. 
In particular, for two vertices having the same $G$-coordinate, 
we transformed the Green-type representation into a second-order linear recurrence representation of the form 
\[
\frac{V_\ell V_{N-\ell}}{V_N}. 
\]
This shows that the recurrence structures appearing on $C_N^k$ in our previous work are also preserved on the Cartesian product graph $C_N^k\square G$. 

Moreover, when $G$ is a walk-regular graph, 
the average hitting time between two vertices having the same $G$-coordinate can be described only in terms of the Laplacian eigenvalues of $G$ and their multiplicities. 
We also derived formulas for the number of spanning trees and the number of two-component spanning forests, 
and gave examples involving complete graphs, 
complete bipartite graphs, and the Petersen graph.

One possible direction for future research is to extend the present results to the case in which the second factor $G$ is a nonregular graph. 
In this case, since the stationary distribution is not uniform, 
a formulation based on the transition probability matrix or the normalized Laplacian will be required. 
It would also be interesting to investigate whether similar recurrence structures appear when $C_N^k$ is replaced by a general circulant graph or a Cayley graph.


\end{document}